\documentclass[12pt,twoside]{article}
\usepackage{a4wide}
\usepackage{amssymb,amsfonts,amsthm,amsmath}
\usepackage[utf8]{inputenc}
\usepackage[english]{babel}
\usepackage{epsfig}
\usepackage{hyperref}
\usepackage{array}
\usepackage{graphicx,epstopdf}
\usepackage{latexsym,color, enumerate}
\usepackage{bbm,url}
\usepackage{setspace}
\usepackage{caption,subcaption}

\usepackage{mathalpha,mathrsfs}

\clearpage
\vspace{3cm}

\newtheorem{theorem}{Theorem}[]
\newtheorem{lemma}[theorem]{\bf Lemma}

\newtheorem{problem}[theorem]{\bf Problem}
\newtheorem{conjecture}[theorem]{\bf Conjecture}

\newtheorem{claim}[theorem]{\bf Claim}

\newtheorem{definition}[theorem]{\bf Definition}

\begin{document}
\newcommand\cA{\mathcal{A}}
\newcommand\cB{\mathcal{B}}
\newcommand\cC{\mathcal{C}}
\newcommand\cF{\mathcal{F}}
\newcommand\cG{\mathcal{G}}
\newcommand\cH{\mathcal{H}}
\newcommand\cJ{\mathcal{J}}
\newcommand\cL{\mathcal{L}}
\newcommand\cM{\mathcal{M}}
\newcommand\cQ{\mathcal{Q}}
\newcommand\cR{\mathcal{R}}
\newcommand\cS{\mathcal{S}}
\newcommand{\cSk}{{\mathcal S}_k}
\newcommand{\cSd}{{\mathcal S}_d}
\newcommand{\cSt}{{\mathcal S}_3}
\newcommand\EE{\text{\rm E}}
\newcommand\ex{{\rm ex}}

\newcommand{\todo}[1]{\textcolor{red}{TODO: {#1}}}

\def \today {\ifcase \month \or January\or February\or March\or April\or
  May\or June\or July\or August\or September\or October\or November\or
  December\fi\ \number \day, \number \year}

\pagestyle{myheadings}
\markboth{{\small \sc F\"uredi, Keszegh, Manuel:}}{{\it\small Forbidden stars and visibility of lattice points \hfill {\rm \today}${}$\quad${}$}}

\title{Forbidden stars in multidimensional $0$-$1$ matrices\\ and visibility of lattice points}

\author{Zolt\'an F\"uredi\thanks{HUN-REN Alfréd Rényi Institute of Mathematics, Budapest, Hungary}\and 
Bal\'azs Keszegh\thanks{HUN-REN Alfréd Rényi Institute of Mathematics and ELTE Eötvös Loránd University, Budapest, Hungary. Supported by the ERC Advanced Grant ``ERMiD'', no.~101054936 and by the EXCELLENCE-24 project no.~151504 Combinatorics and Geometry of the NRDI Fund.}\and 
Paul Manuel\thanks{Department of Information Science, College of Life Sciences, Kuwait University, Kuwait}}

\date{}

\maketitle
	
\begin{abstract}
 A $d$-dimensional $0$-$1$ matrix $M$ of size $n_1\times n_2\times \dots \times n_d$ can be considered as a Boolean function
  $M: B(n_1\times n_2\times \dots \times n_d) \to \{ 0,1\}$, where $B$ is the $d$-dimensional box of lattice points $(x_1, \dots, , x_d)\in Z^d$ with  $0\leq x_i \leq n_i-1$, $1\leq i\leq d$. The $0$-$1$ matrix $M$ can also be described as a subset $P:=P(M)$ of $B$ such that $x\in P$ if and only if $M(x)=1$. 
A $k$-star with center $p$ in $M$ corresponds to a $(k+1)$-element subset $\{ p, p_1, \dots , p_k\} \subset B$ such that
$p$ and $p_i$ differ only in one coordinate (for all $1\leq i\leq k$) and these $k$ coordinates are distinct.

Here we consider the problem of determining the maximum number of $1$-entries of a $0$-$1$ matrix $M$ of dimension $d$ and size $n\times n \times \dots \times n$ that avoids all $k$-stars.
 Our main results are the asymptotical solution of the problem for every $d$ and $k$ (as $n\to \infty$), very close bounds for $k=d$, and the exact solution of the $d=k=3$ case.

This problem has connections to several other areas of discrete mathematics, including $k$-partite hypergraphs, independent set problems, dominating set problems and covering codes.
One of our tools (concerning maximal packings of induced copies of a given hypergraph, Theorem~\ref{thm:induced_new}) might have independent interest.

\end{abstract}

\section{Introduction}
\subsection{A short history of matrix pattern avoidance problems}

A central family of questions of combinatorics are {\em Tur\'an type problems}, when one wants to find the largest structure that doesn't include a given forbidden substructure.
Paul Tur\'an introduced this problem in graph theory~\cite{Turan-41}.
Tur\'an type problems of $0$-$1$-matrices (also called \emph{Boolean matrices}) are  widely investigated in part due to their applications in Information Theory and Computer Graphics~\cite{Edwards-72}. Two dimensional $0$-$1$ matrices can be regarded as incidence matrices of bipartite graphs with a linear order on their vertex parts. With this analog in mind, we say that a $d$-dimensional $0$-$1$ matrix $M$ \emph{contains} another $0$-$1$ matrix $A$ if $M$ has a submatrix that can be transformed to $A$ by changing any number of $1$-entries to $0$-entries. Otherwise, $M$ \emph{avoids} $A$. Note that the rows need not be consecutive in a submatrix, but the order of rows has to be preserved. The matrix $A$ is usually referred to as the \emph{pattern} to be avoided and the extremal function of $A$ is the maximum number of $1$-entries in an $n\times n$ matrix avoiding $A$. These notions generalize naturally to families of patterns that we want to avoid.

A lot of research concentrates on the two dimensional case, avoiding one or a few patterns. To stay brief, we mention the celebrated result of Marcus and Tardos~\cite{MaTa-04} showing that for any permutation pattern, its extremal function is linear in $n$.
Pattern avoidance problems have strong connections to Davenport-Schinzel theory. Some interesting applications of the pattern avoidance problem of $0$-$1$ matrix theory are found in the construction of algorithms and data structures as observed by Pettie~\cite{Pettie-10}. One interesting sequence of research in pattern avoidance was originated by the three conjectures of F\"uredi and Hajnal~\cite{FuHa-91}. For more chronological history see, e.g.,~\cite{Geneson-21}.

When considering pattern avoidance of higher dimensional $0$-$1$ matrices, a very important example is the famous density Hales-Jewett theorem \cite{HJ}. For specific recent results see~\cite{GeTi-15, GeTs-23}.

Observe that when we avoid the family of every ordering of the rows/columns of the incidence graph of a specific bipartite graph, then the $2$-dimensional matrix family avoidance problem becomes equivalent to the (unordered) Tur\'an problem of avoiding that graph as a subgraph in a bipartite graph. The same way, higher dimensional matrix avoidance problems can emulate subhypergraph avoidance problems. This is also the case for the star avoidance problem that we will investigate in this paper, and this connection will be heavily used in the paper, see also Section \ref{sec:further} about history of this area.

For further related problems, especially for which our results about star avoidance have implications, see Section \ref{sec:further}.

\subsection{Definitions, Stars in multidimensional matrices}
We use some of the notation of~\cite{GeTi-15, GeTs-23}.  A $d$-dimensional $0$-$1$ matrix $M$ of size $n_1\times n_2\times \dots \times n_d$ can be considered as a Boolean function
  $M: B(n_1\times n_2\times \dots \times n_d) \to \{ 0,1\}$, where $B$ is a $d$-dimensional {\em box of lattice points} $(x_1, \dots, , x_d)\in Z^d$ with  $0\leq x_i \leq n_i-1$, $1\leq i\leq d$. The $0$-$1$ matrix $M$ can also be described as a subset $P:=P(M)$ of $B$ such that $x\in P$ if and only if $M(x)=1$.
  The number of $1$-entries of $M$ is called the \emph{weight} of $M$. A \emph{$j$-row} $r_M(x,j)$ of a matrix $M$
  through the element $x\in B$ consists of the entries of $M$ whose coordinates are fixed except for the $j$'th coordinate, i.e.,
  it is a $1$-dimensional submatrix $$r_M(x,j):=\{ M(y): y\in B, \, x \text{ and } y \text{ differ only in the } j\text{'th coordinate}\}.$$ 
More generally, a $k$ dimensional {\em cross section} 
  is the set of all entries whose coordinates are arbitrary on some set of $k$ coordinates 
  and fixed on the other coordinates.  
Thus a row is a $1$ dimensional cross section and its weight is the number of $1$ entries on an axis parallel line.
A row with exactly one $1$-entry is called a \emph{single-entry line}.

When one regards a $d$ dimensional $0$-$1$ matrix $M$ geometrically as a subset $P$ of a $d$ dimensional rectangular box of lattice points (or grid points) then a $k$ dimensional cross section is the set of grid points that lie on a $k$ dimensional axis-parallel affine subspace.

Let us denote for a family of matrices $\cal A$ by $\ex_d(\cA,n_1,\ldots,n_d)$ the maximal possible weight (that is, the maximal possible number of $1$-entries) of a $d$ dimensional $n_1\times \dots \times n_d$ matrix avoiding every member of $\cal A$. We additionally denote $\ex_d(A,n_1,\ldots,n_d)=\ex_d(\{A\},n_1,\ldots,n_d)$ for a single matrix $A$. When $n_1=\ldots = n_d=n$, $\ex_d(\cA,n_1,\ldots,n_d)$ is denoted by $\ex_d(\cA,n)$.

Given a $d$ dimensional $0$-$1$ matrix $M$ a $1$-entry $p$ of $M$ is called a \emph{$k$-hub} if there are $k$ coordinates such that for each of these coordinates there exists a $1$-entry in $M$ which differs from $p$ only in this coordinate. A \emph{$k$-star} in $M$ is a $k$-hub together with these $k$ $1$-entries.
As a submatrix, a $k$-star is a matrix with $k$ of its sizes being $2$, the rest being $1$ and that has $(k+1)$ $1$-entries: one $1$-entry being the hub and the additional $k$ $1$-entries differ from this in exactly one coordinate. Let us call the family of all $d$ dimensional $k$-stars by $\cSk$.

Note that $\cSk$ for $k\ge 2$ consists of $2^k\binom{d}{k}$ $0$-$1$ matrices, it includes all $2^k$ orderings of each of the $\binom{d}{k}$ stars, because $\{0,1\}^d$ contains exactly this many $k$-stars.

\subsection{Preliminary results of the main problem of this paper}

We are interested in bounding $\ex_d(\cSk,n)$.
Before presenting our main results in the next Section we collect a few known or simple cases.  Short proofs are also included for the sake of completeness.

Observe that the function $\ex_d(\cS_k,n_1, \dots, , n_d)$ is monotone in all of its parameters.
If the matrix $M$ is $\cS_k$-free then so does the one obtained by permuting its layers.
Also one can exchange any pair of coordinates.
Since $\ex_d(\cS_k,n_1, \dots, , n_d)= \ex_{d+1}(\cS_k,n_1, \dots , n_d, 1)$,  to avoid trivialities we suppose that $n_i\geq 2$ for each
$1\leq i\leq d$ and $1\leq k\leq d$.

\paragraph{Multidimensional permutation matrices}
A $d$-dimensional $0$-$1$ matrix $M$ of size $n_1\times n_2\times \dots \times n_d$ is $\cS_1$-free if each row (in all $d$ directions) contains at most one non-zero element. These are frequently called generalized permutation matrices (see, e.g.,~\cite{BrCa-21, KlRu-95, Geneson-21}, and Linial and Simkin~\cite{Linial}).
If $n_1\geq n_2\geq \dots \geq n_d$ then
\begin{equation}\label{claim:onehub}
	\ex_d(\cS_1,n_1, \dots,  n_d)= n_2\times \dots \times n_d.  \,\text{ In particular }\,  \ex_d(\cS_1,n)=n^{d-1}.
\end{equation}
Indeed, each of the $n_2\times \dots \times n_d$ $1$-rows parallel to the first axis contains at most one $1$-entry, so the weight of $M$ is at most the number of these rows. On the other hand (as it was observed among others by Sabidussi, see~\cite{HaIm-11}) the whole box
 $B(n_1\times n_2\times \dots \times n_d)$ can be decomposed into $n_1$ permutation matrices as follows:
 $B=\cup P_\alpha$ where $0\leq \alpha\leq n_1-1$ and $P_\alpha$ consists of all $(x_1,\dots , x_d)$ with $\sum x_i= \alpha\mod n_1$.
\qed

\paragraph{The $2$-dimensional case}
(Folklore, see, e.g.,~\cite{MaBr-23}).
\begin{equation}\label{claim:2-hub}
	\ex_2(\cS_2,n_1, n_2)=n_1+n_2-2\text{ if }n_1,n_2\ge 2.\quad
 \,\text{ In particular }\,  \ex_2(\cS_2,n)=2n-2.
\end{equation}
Indeed, define $M$ such that its $1$-entries are those entries that have exactly one $0$-coordinate. It is $\cS_2$-free with weight $n_1+n_2-2$.
Now consider an $\cS_2$-free  $M$ of size $n_1\times n_2$.
To get an upper bound on its weight we proceed by induction. 
The case $n_1=n_2=2$ is obvious.
Suppose $n_1\geq n_2\geq 2$.
If there is a column (vertical row) of weight at most $1$ then we can delete that column and apply induction for the rest.
Otherwise, the weight of each column is at least two, so these $1$-entries must be in different horizontal rows.
Then the weight of $M$ is at most $n_2$.
\qed

\paragraph{Excluding a single star, an ordered version}
Here we consider the case when out of the possible $2^k\binom{d}{k}$ types of stars only one is excluded.

The $d$-dimensional $k$-star $S_k^+$ is defined as a $(k+1)$-element subset $\{ p, p_1, \dots , p_k\} \subset B$ such that
$p$ and $p_i$ differ only in the coordinate $i$ (for all $1\leq i\leq k$), and the center $p$ dominates the other members (i.e., each of its coordinates is at least as large as the corresponding coordinate of the other members).
So in this case the ordering of the matrix does matter. Determining the maximum weight of an
$S_k^+$-free matrix is easy:
$$
\ex_d(S_k^+,n_1, \dots,  n_d)= n_1\times \dots \times n_d- \prod_{i\leq k}(n_i-1)\times \prod_{k<i\leq d}n_i.$$
In particular
\begin{equation}\label{claim:onestar} \ex_d(S_k^+,n)=n^{d-k}(n^k-(n-1)^k)=kn ^{d-1}+O(k^2n^{d-2}).
\end{equation}
Indeed, define $M$ such that its $1$-entries are those entries that have at least one $0$ coordinate in the first $k$ coordinates. It is $S_k^+$-free with a weight described in~\eqref{claim:onestar}.
Now consider an $S_k^+$-free $M$.
To get an upper bound on its weight it is enough to consider each $k$ dimensional cross section of $M$ with
fixed values in the last $d-k$ positions.
	Thus we can assume that $d=k$ and it is enough to prove that $\ex_k(S_k^+)=\left(\prod_{i\leq k}n_i\right)-\prod_{i\leq k}(n_i-1)$.
To see this, take an $M$ with maximum weight. Observe that for each $1$-entry in $M$ with no $0$-coordinates, we can change one of its coordinates to $0$ to get a $0$-entry. Note that no $0$-entry can be assigned to two $1$-entries this way. Replace each such $1$-entry with this corresponding entry to get a new matrix $M'$ with the same weight, and having no $1$-entries with no $0$-coordinates. This has weight at most as required, finishing the proof.

\paragraph{A general upper bound}
Call a row $r$ a $1$-{\em entry line} if it contains exactly one element of $P$, i.e., it has  one 1-entry and all others are $0$'s.
If the matrix contains no $k$-star then for each $1$-entry $p$ there are (at least) $d-k+1$ rows that are empty except for $p$.
 As we have $dn^{d-1}$ rows (in all directions altogether) and a row corresponds to at most one $1$-entry this way, we have at most $dn^{d-1}/(d-k+1)$ $1$-entries.
\begin{equation}\label{claim:k-hub-upper}
  \ex_d(\cS_k,n)\le \frac{d}{d-k+1}n^{d-1}.
 \end{equation}

	One can get a slightly better upper bound by double counting the $0$-entries. Suppose that $M$ contains $m$  $1$'s.
The number of $1$-entry lines is at least $m(d-k+1)$. They cover (with multiplicities) at least $m(d-k+1)(n-1)$ $0$-entries.
Each $0$-entry is counted at most $d$ times (for each axis direction at most once). Hence
	$m(d-k+1)(n-1)\le d(n^d-m)$.
Reordering yields
\begin{equation*}\label{claim:k-hub-upper-better}
 \ex_d(\cS_k,n)=m \le \frac{d}{d-k+1}n^{d-1}\times \frac{n(d-k+1)}{n(d-k+1) +(k-1)}.
 \end{equation*}
In the case $k=d$ this gives
\begin{equation}\label{claim:k-hub-upper-d}
 \ex_d(\cS_d,n)\le {d}n^{d-1}\times \frac{n}{n +d-1}.
 \end{equation}

\begin{claim}\label{claim:d-hub}{\rm [The $k=d$ case]} For all $d\ge 3$ as $n\to \infty$
\begin{equation}\label{d-estimate}
d(n-1)^{d-1}+3\binom{d}{3}(n-2)^{d-3}\le  \ex_d(\cSd,n)\le  d(n-1)^{d-1}+d\binom{d}{2}(n-2)^{d-3}+O(d^4n^{d-4}).
\end{equation}
\end{claim}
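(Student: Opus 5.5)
The claim has two halves. The lower bound will come from an explicit construction. The upper bound should follow from the double-counting bound \eqref{claim:k-hub-upper-d} by an asymptotic expansion in $1/n$.

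\textbf{Lower bound.} I would take $P=P_1\cup P_3$, defined as follows.
\begin{itemize}
\item $P_1$ consists of all lattice points with exactly one coordinate equal to $0$ and all other coordinates in $\{1,\dots,n-1\}$. Thus $|P_1|=d(n-1)^{d-1}$.
\item For every triple $\{a,b,c\}$ of coordinates, $P_3$ contains the points whose restriction to $\{a,b,c\}$ is one of $(0,0,1),(0,1,0),(1,0,0)$ and whose remaining $d-3$ coordinates all lie in $\{2,\dots,n-1\}$. Thus $|P_3|=3\binom{d}{3}(n-2)^{d-3}$.
\end{itemize}
The sets are disjoint, since points of $P_3$ have two zero coordinates. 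A point of $P_3$ determines its triple uniquely, because the coordinate equal to $1$ is the only coordinate outside $\{0\}\cup\{2,\dots,n-1\}$.

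It remains to check that no point is a $d$-hub, i.e.\ every point of $P$ has some direction in which its row contains no other point of $P$.
\begin{itemize}
\item Take $p\in P_1$ with $p_i=0$. Changing $p_i$ to any nonzero value gives a point with no zero coordinate. Such a point lies in neither $P_1$ nor $P_3$, so the $i$-row through $p$ is lonely.
\item Take $q\in P_3$ with zeros at $a,b$ and $q_c=1$. Changing $q_c$ to $0$ gives a point with three zeros, which is not in $P$. Changing $q_c$ to a value $t\ge 2$ gives a point with zeros exactly at $a,b$ and no coordinate equal to $1$. Such a point is not in $P_1$ (it has two zeros) and not in $P_3$. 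So the $c$-row through $q$ is lonely.
\end{itemize}
Hence $P$ is $\cSd$-free and has the required weight.

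\textbf{Upper bound.} I would not redo any combinatorics but simply expand \eqref{claim:k-hub-upper-d}:
\[
\ex_d(\cSd,n)\le \frac{dn^{d}}{n+d-1}=dn^{d-1}\sum_{s\ge 0}\Bigl(-\frac{d-1}{n}\Bigr)^{s}
= dn^{d-1}-d(d-1)n^{d-2}+d(d-1)^2n^{d-3}-d(d-1)^3n^{d-4}+\cdots .
\]
Next I would expand the target expression $d(n-1)^{d-1}+d\binom d2(n-2)^{d-3}$. The first term gives $dn^{d-1}-d(d-1)n^{d-2}+d\binom{d-1}{2}n^{d-3}+O(d^4n^{d-4})$. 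The second gives $d\binom d2 n^{d-3}+O(d^4n^{d-4})$.

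The $n^{d-1}$ and $n^{d-2}$ coefficients agree. For the $n^{d-3}$ coefficient the key identity is
\[
d\binom{d-1}{2}+d\binom d2=\frac{d(d-1)\bigl((d-2)+d\bigr)}{2}=d(d-1)^2,
\]
so the first three coefficients coincide exactly. Everything left over is of order $d^4n^{d-4}$.

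\textbf{Main obstacle.} The only delicate point is controlling the error terms uniformly, so that they are genuinely $O(d^4n^{d-4})$. For the geometric series, the tail after the $n^{d-3}$ term is at most $d(d-1)^3n^{d-4}\big/\bigl(1-(d-1)/n\bigr)$. For the binomial expansions of $(n-1)^{d-1}$ and $(n-2)^{d-3}$, I would bound the terms of order $n^{d-4}$ and below by $d\binom{d-1}{3}n^{d-4}$ and $d\binom d2(d-3)\,2n^{d-4}$ respectively, times a bounded alternating or geometric factor. Both steps assume $d$ is small compared with $n$, e.g.\ $d\le n/2$; this is harmless since $n\to\infty$ for each fixed $d$.
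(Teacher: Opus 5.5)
Your proposal is correct and follows the paper's own route. For the lower bound you use the same construction: all entries with exactly one $0$ coordinate, plus those with two $0$ coordinates, one coordinate equal to $1$, and the remaining $d-3$ coordinates at least $2$ (the paper writes ``$(d-2)$'' here, which is a slip; your count is the right one). For the upper bound you expand \eqref{claim:k-hub-upper-d} as the paper does, and you supply what it leaves as ``high school algebra'': the lonely-row check for each point, the coefficient identity $d\binom{d-1}{2}+d\binom{d}{2}=d(d-1)^2$, and the tail estimates.
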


The upper bound follows from~\eqref{claim:k-hub-upper-d} after some high school algebra. 
For the lower bound a simple $\cSd$-free construction of weight $d(n-1)^{d-1}$ can be obtained by taking as $1$-entries all entries of $B_d(n)$ having exactly one
 $0$ coordinate. This is not maximal, one can add $3\binom{d}{3}(n-2)^{d-3}$ the entries having two $0$ coordinates, one  $1$ coordinate, and
 $(d-2)$ coordinates with values exceeding $1$.
The very same idea of having exactly two $0$ coordinates implies the lower bound recurrence
\begin{equation}\label{d-recursion}
 \ex_d(\cS_d,n)\geq d(n-1)^{d-1} +\binom{d}{2}\ex_{d-2}(\cS_{d-2},n-1).
 \end{equation}

\section{Main results}


We have $\ex_d(\cS_k,n)=\Theta_k(n^{d-1})$ since $n^{d-1}\leq  \ex_d(\cS_k,n)$ by~\eqref{claim:onehub} and it is at most $kn ^{d-1}+O(k^2n^{d-2})$ by~\eqref{claim:onestar}.
Here we determine $\lim_{n\to\infty}\ex_d(\cS_k,n)/n^{d-1}$ (as $k,d$ are fixed) and present some exact values as well.

Using~\eqref{claim:k-hub-upper} we already have explicit asymptotically matching constructions for $k=1$ from~\eqref{claim:onehub}
 and for $k=d$ from~\eqref{d-estimate}.
One of our main results is that~\eqref{claim:k-hub-upper} is in fact the right bound.

\begin{theorem}\label{thm:k-hub}
$
\ex_d(\cSk,n)=  \frac{d}{d-k+1} n^{d-1}+o(n^{d-1})$ if $1\leq k\le d$ are fixed and $n\to \infty$.
\end{theorem}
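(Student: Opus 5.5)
The upper bound $\ex_d(\cSk,n)\le \frac{d}{d-k+1}n^{d-1}$ is already~\eqref{claim:k-hub-upper}, so only a matching construction is needed. The plan is to make the counting argument behind~\eqref{claim:k-hub-upper} almost tight. Translate to hypergraph language: a point $p\in B_d(n)$ is a $d$-edge of the complete $d$-partite hypergraph with classes $[n]$, and the $j$-row through $p$ is the $(d-1)$-set obtained by deleting the $j$'th coordinate. So $P$ is $\cSk$-free iff every $p\in P$ has at least $d-k+1$ of its $d$ shadow $(d-1)$-sets not contained in any other edge of $P$. Equality in the counting argument would require that almost all $dn^{d-1}$ rows are single-entry rows owned by some point, each point owns exactly $d-k+1$ of them, and the remaining $k-1$ rows of each point lie in only $o(n^{d-1})$ rows in total. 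In other words, the shared rows must carry many points each.

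\emph{Gadget.} Fix a large integer $s$ and a set $S$ of $k-1$ coordinates. Let $G\subset[s]^d$ consist of all $x$ with $\sum_{j\notin S}x_j\equiv 0 \pmod s$. Then $|G|=s^{d-1}$. Each row in a direction $j\notin S$ meets $G$ in exactly one point, since changing $x_j$ alone changes the sum. Each row in a direction $j\in S$ through a point of $G$ is completely contained in $G$. Hence every point of $G$ owns $d-k+1$ private rows, while the $(k-1)s^{d-1}$ incidences in the $S$-directions use only $(k-1)s^{d-2}$ rows. The gadget therefore has ratio $|G|/(\text{rows used})=\frac{s^{d-1}}{(d-k+1)s^{d-1}+(k-1)s^{d-2}}$, which tends to $1/(d-k+1)$ as $s\to\infty$. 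The gadget is $\cSk$-free on its own, because each point has $k-1<k$ non-private directions. To avoid symmetry issues one can use all $\binom{d}{k-1}$ choices of $S$, or just one.

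\emph{Packing.} Now place many ``combinatorial copies'' of $G$ in $B_d(n)$. A copy is given by choosing injections $\phi_i:[s]\to[n]$ for each coordinate and taking the image of $G$. We need the copies to be row-disjoint (no row of $B_d(n)$ is used by two copies) and \emph{induced}, meaning that the union has no additional row incidences. Row-disjointness alone already guarantees the latter, because two points in a common row would make that row used by both copies. Consider the auxiliary hypergraph $\cH$ whose vertices are the $dn^{d-1}$ rows and whose hyperedges are the row sets of copies. It is uniform, and it is approximately regular with small codegrees: two fixed rows lie in a copy with probability lower by a factor $n^{-1}$ or so. The Frankl--R\"odl/Pippenger nibble (in the form of the induced packing Theorem~\ref{thm:induced_new} announced in the abstract) then gives a packing covering all but $o(n^{d-1})$ rows. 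This yields $\frac{d n^{d-1}-o(n^{d-1})}{(d-k+1)+(k-1)/s}$ points. Letting $s\to\infty$ slowly gives $\frac{d}{d-k+1}n^{d-1}-o(n^{d-1})$.

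\emph{Main obstacle.} The main difficulty is the packing step. One must check the degree and codegree conditions of $\cH$ carefully, since rows of different directions are covered with different multiplicities per copy (private versus full rows), and balance them. If needed, combine gadgets with all choices of $S$ so that every direction is used symmetrically. One must also make sure the resulting union really has no $k$-star, i.e.\ that no uncovered or leftover rows create extra incidences. This holds automatically from row-disjointness, but it is exactly the ``induced'' requirement that the paper's Theorem~\ref{thm:induced_new} is designed to handle.
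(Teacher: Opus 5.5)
Your argument is correct, but it takes a different route from the paper. The paper packs the smallest possible gadget, $\cF_{d,\ell}$: one point together with only its $\ell=d-k+1$ private rows. It then relies on the \emph{induced} packing Theorem~\ref{thm:induced_new}. Inducedness forces the remaining $k-1$ rows through each point to lie outside $\cH$, i.e.\ in the $\delta$-fraction of unused rows, where they may carry arbitrarily many points.

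You instead build the sharing of the non-private rows into the gadget itself, a local mod-$s$ version of the paper's explicit mod-$n$ construction. Then a plain row-disjoint packing suffices, and inducedness comes for free, as you note. As a result you do not need Theorem~\ref{thm:induced_new}, nor the generalized transversal designs of Lemma~\ref{lemma:0}: Theorem~\ref{thm:matching} applies directly to your row hypergraph. Take all $\binom{d}{k-1}$ sets $S$ and all injections $\phi_i$. Then $\mathrm{Sym}([n])^d\rtimes\mathrm{Sym}(d)$ acts transitively on rows and on copies, so the hypergraph is exactly regular, also after discarding repeated edges. Two distinct rows together prescribe at least $d$ (coordinate, value) pairs that must lie in the images of the $\phi_i$, against $d-1$ for a single row. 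Hence codegrees are $O(s\Delta/n)$.

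What you pay for this is a second parameter $s$, the loss factor $(k-1)/s$, and a double limit. The paper's route has no auxiliary parameter, and its packing tool is reused verbatim for Theorem~\ref{thm:weak-k-hub}.

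One correction to your hedging: with direction-preserving embeddings you cannot use ``just one'' $S$. Each copy uses $s^{d-1}$ rows in every direction $j\notin S$. Row-disjointness then forces $N\le (n/s)^{d-1}$, i.e.\ at most $n^{d-1}$ points, which is the wrong constant for $k\ge 2$. The symmetrization over all $S$ is essential, not optional.

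Applying Theorem~\ref{thm:induced_new} to the $(d-1)$-uniform $d$-partite row hypergraph of $G$ would symmetrize automatically, since its partition-preserving embeddings may permute the parts. That presumably explains why you cite it, but the direct Pippenger--Spencer argument is simpler.
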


For $1<k<d$ we present a rather involved probabilistic argument in Sections~\ref{sec:hypergraphs} and~\ref{sec:general}.
When $d/(d-k+1)$ is an integer one can make an explicit construction, as follows.
Partition the set $\{1,2,\dots ,d\}$  into  $(d-k+1)$-element blocks $I_1, \dots ,I_{d/(d-k+1)}$.
Put an element $x\in B$ into $P$ if and only if there exists one part $I_u$ in which the coordinates of $x$ sum up to $0$ modulo $n$ and in every other $I_v$ the sum is non-zero.
It is not difficult to check that there is no $k$-star in this configuration.
This implies that for $d,n\ge 2$ and $k\le d$

\begin{equation}\label{claim:k-hub-lower-construction}
	\ex_d(\cSk,n)\ge \left\lfloor \frac{d}{d-k+1}\right\rfloor n^{d-1}+O_d(n^{d-2}).
\end{equation}

It is tempting to {\bf conjecture} that~\eqref{d-recursion} might give the correct value of  $\ex_d(\cS_d,n)$, at least if $n$ is sufficiently large. Note that~\eqref{d-estimate} yields $3(n-1)^2+3\le \ex_3(\cSt,n)\le  3(n-1)^{2}+9$.  We prove that in $3$-dimensions the lower bound is indeed the correct value.

\begin{theorem}
	\label{thm:3-dim-no-3-hubs}
Let $M$ be a $3$ dimensional $0$-$1$ matrix of size $n_1 \times n_2 \times n_3$   with $n_1,n_2,n_3\ge 2$.
If $M$ does not contain a $3$-star then it has at most
\begin{equation}\label{f3d}
(n_1-1)(n_2-1)+(n_1-1)(n_3-1)+(n_2-1)(n_3-1)+3
 \end{equation}
$1$-entries and this bound is optimal.
\end{theorem}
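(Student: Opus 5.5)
The plan is to prove the optimality (lower bound) by a construction and the upper bound by induction on $n_1+n_2+n_3$, peeling off one $2$-dimensional cross section (a \emph{layer}) at a time.

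\textbf{Construction.} Take as $1$-entries all points of $B(n_1\times n_2\times n_3)$ with exactly one $0$ coordinate. Add the three points $(1,0,0)$, $(0,1,0)$, $(0,0,1)$. The first set has weight $(n_1-1)(n_2-1)+(n_1-1)(n_3-1)+(n_2-1)(n_3-1)$, so the total is~\eqref{f3d}.

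A point with exactly one $0$ coordinate, say $x_1=0$, has an empty line in direction $1$ apart from itself. Indeed, all other points on that line have no $0$ coordinate, and the added points are not on it because they have two $0$'s.

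For $(1,0,0)$, look at the direction-$1$ line. It contains $(0,0,0)$, which is not in $P$, and the points $(a,0,0)$ with $a\ge 2$, which are also not in $P$. So that line is a single-entry line, and no $3$-star exists.

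\textbf{Upper bound: reduction.} Write $f(n_1,n_2,n_3)$ for~\eqref{f3d}. Note that
\[
f(n_1,n_2,n_3)-f(n_1-1,n_2,n_3)=(n_2-1)+(n_3-1).
\]
So it suffices to show that an $\cSt$-free $M$ with some $n_i\ge 3$ has a layer, perpendicular to a direction $i$ with $n_i\ge 3$, whose weight is at most the sum of the other two $n_j-1$. Deleting that layer keeps $M$ $\cSt$-free and induction applies.

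The base case is $n_1=n_2=n_3=2$, where the bound is $6$. This can be checked directly. Every point of $P$ needs an axis-neighbour outside $P$, and no $\cSt$-free subset of the cube $\{0,1\}^3$ has $7$ points: with $7$ points, the neighbours of the missing point are hubs of $3$-stars.

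\textbf{Upper bound: finding a light layer.} To find a light layer I would use the fact, underlying~\eqref{claim:k-hub-upper}, that every $1$-entry $p$ lies on a single-entry line. Assign to $p$ one such direction $\iota(p)\in\{1,2,3\}$, and let $a_i=|\iota^{-1}(i)|$. Then $a_i\le \prod_{j\ne i}n_j$, since distinct points with the same direction use distinct lines.

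Fix a layer $L$ perpendicular to direction $1$. The points $p\in L$ with $\iota(p)\in\{2,3\}$ each have a single-entry line inside $L$. Such a set contains no $2$-star of $L$ centered at those points, so by the argument of~\eqref{claim:2-hub} (applied to this restricted family) their number is at most $n_2+n_3-2$. Hence
\[
|L\cap P|\le n_2+n_3-2+|\{p\in L:\iota(p)=1\}|.
\]
Therefore a light layer exists in direction $1$ unless every direction-$1$ layer contains a point with $\iota(p)=1$, and similarly for directions $2$ and $3$.

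In that remaining situation I would sum the inequality over all layers in all three directions. The aim is to get
\[
3m\le \sum_i n_i\,(n_{i+1}+n_{i+2}-2)+m,
\]
and to compare this with the trivial bound $m\le\sum a_i$. The goal is a contradiction with $m\ge f+1$.

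\textbf{Main obstacle.} I expect the main obstacle to be exactly this step. The variant of~\eqref{claim:2-hub} for "points having a single-entry line in the layer" must be justified: the layer need not be $\cS_2$-free, only those points must be lonely in one in-layer direction. The averaging also loses the constant $+3$. To handle this I would try a more careful choice of $\iota$, preferring in-layer directions when possible. I would also try a discharging argument showing that a configuration with every layer heavy forces a $3$-star, using the boundary cases where some $n_i=2$ as a separate, more hands-on induction.
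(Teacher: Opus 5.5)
Your construction and your layer-deletion reduction agree with the paper. Your per-layer inequality is also fine, and it is immediate: the points of $L$ with a private line inside $L$ form an $\cS_2$-free subset of $L$, so \eqref{claim:2-hub} applies directly.

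\textbf{The gap.} The real gap is the case where every layer is heavy, and this is where the whole difficulty of the theorem lies. Neither of your suggested routes can close it.
\begin{itemize}
\item Summing your inequality over all layers gives only $m\le \sum_{i<j}n_in_j-\sum_i n_i$. This exceeds $f(n_1,n_2,n_3)$ by $n_1+n_2+n_3-6$. So the loss is linear, not just the constant $3$. For $n_1=n_2=n_3\ge 3$ it is even weaker than \eqref{claim:k-hub-upper-d}.
\item Heavy layers do not force a $3$-star. Take $P=\{(x,y,z): x=y \text{ or } x=z\}$ in $B(n\times n\times n)$. It is $\cS_3$-free: each $(a,a,c)$ with $c\neq a$ is alone on $(a,*,c)$, each $(a,b,a)$ with $b\neq a$ is alone on $(a,b,*)$, and $(a,a,a)$ is alone on $(*,a,a)$.
\item Yet every layer of this $P$, in every direction, has exactly $2n-1$ points, so your induction step never applies.
\item For $n=3$ this $P$ has $15=f(3,3,3)$ points. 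So in the heavy case the bound must be proved with no slack at all. A refined choice of $\iota$ or a discharging rule aimed at producing a $3$-star cannot work.
\end{itemize}

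\textbf{What the paper does in the heavy case.}
\begin{itemize}
\item Choose a $0$-entry $o$ maximizing $t(o)$, the number of rows through $o$ with at most one $1$-entry, and move it to the origin.
\item Build an injection from $P$, minus at most three special entries, into the set $\cL$ of rows whose fixed coordinates are all nonzero. Note $|\cL|=f-3$.
\item A $1$-entry with no zero coordinate goes to a private line. One with exactly one zero coordinate goes to the row obtained by replacing that $0$ by $*$. Entries on the three axes through $o$ are treated separately.
\item If $t=1$, the private lines are pairwise disjoint, giving $m\le n_1n_2n_3/\min_i n_i<f$.
\item If $t=3$, each axis through $o$ carries at most one $1$-entry, and these are the special ones.
\item If $t=2$, two axes carry at most one $1$-entry each. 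Heaviness, together with its consequence that no cross section contains two parallel full rows, shows that at most two $1$-entries on the third axis cannot be sent to an unused row of $\cL$ in their own layer: one with a full $Z$-row and one with a full $Y$-row. The condition $t\le 2$ shows these two cannot coexist.
\end{itemize}

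\textbf{Minor slip in your base case.} The neighbours of the missing point are exactly the points that are \emph{not} $3$-hubs. The $3$-hubs are the points at Hamming distance at least $2$ from the missing point. With $7$ points such a point exists, so the conclusion is right, but the stated reason is not.
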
	
The $\cS_3$-free construction witnessing optimality comes from~\eqref{d-recursion}, let us define it explicitly.
Let the $1$-entries be those elements of $B(n_1,n_2,n_3)$ that have exactly one zero coordinate plus additional three $1$-entries $(1,0,0)$, $(0,1,0)$, and $(0,0,1)$.
Setting $n_1=n_2=n_3$ we get
\begin{equation}\label{thm:33}
	\ex_3(\cSt,n)=3(n-1)^{2}+3 \text{ if } n\ge 2.
\end{equation}

\subsection{Visibility, axis parallel rays}
Our main Theorem~\ref{thm:k-hub} can be rephrased in the following way. If for every $1$-entry of $M$ there are at least $\ell=d-k+1$ rows containing only this $1$-entry then the maximum number of $1$-entries is $(d/\ell) n^{d-1}+o(n^{d-1})$.
Instead of single-entry lines we may consider axis parallel rays.
Given a point $p\in R^d$ the {\em ray} $R(p,v)$ is the set of all the points of the form $p+\lambda v$ where $v$ or $-v$ is one of the $d$ unit vectors of the standard basis and $\lambda \geq 0$.
We say that $M$ has the $L$-{\em visibility} property, if for each $1$-entry there are at least $L$ \emph{own rays}, in which directions the $1$-entry does not see any other $1$-entries. (Note that $L\leq 2d$, there are two directions parallel to each axis).

Let $g_d(L,n)$ denote the maximum weight of a $n\times n\times \dots \times n$ matrix $M$ of dimension $d$ having the $L$-visibility property.
 As $B(n)$ has $dn^{d-1}$ rows and in an $M$ with the $L$-visibility property each $1$-entry has $L$ own rays, $M$ has at most $2dn^{d-1}/L$ $1$-entries
\begin{equation}\label{claim:k-ray-upper}
  g_d(L,n)\le \frac{2d}{L}n^{d-1}.
 \end{equation}
The slight improvement, analogous to~\eqref{claim:k-hub-upper-d}, does not seem to be that easy.
Nevertheless it is true that~\eqref{claim:k-ray-upper} is asymptotically tight, it gives the correct order of the magnitude

\begin{theorem}\label{thm:weak-k-hub}
	$g_d(L,n)=  \frac{2d}{L} n^{d-1}+o(n^{d-1})$ if $d,n\ge 2$ and $1\leq L\le 2d$.
\end{theorem}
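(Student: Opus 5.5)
The upper bound is \eqref{claim:k-ray-upper}, so only a construction of weight $\frac{2d}{L}n^{d-1}-o(n^{d-1})$ with the $L$-visibility property is needed. I would get it from a random model, treating the $2d$ directions as $2d$ "colour classes" of rays, in the same spirit as the hypergraph-packing argument behind Theorem~\ref{thm:k-hub} (which the paper develops through Theorem~\ref{thm:induced_new}).

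\textbf{Reduction to choosing ray sets.} Assign to each $1$-entry $p$ a set $D(p)$ of exactly $L$ signed directions. We require that for every $v\in D(p)$ the ray $R(p,v)$ contains no other $1$-entry. Each ray set is determined by a row together with an "end" of that row. A row $r$ parallel to axis $j$ can serve as an own ray in direction $+e_j$ only for the last $1$-entry on $r$, and in direction $-e_j$ only for the first. Hence every row supplies at most two own rays, which is exactly where the factor $2d$ comes from.

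\textbf{Construction.} I would build $P$ as a union of "diagonal" permutation-like layers, generalising \eqref{claim:onehub} and the block construction behind \eqref{claim:k-hub-lower-construction}.

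First consider the case where $L$ divides $2d$. Partition the $2d$ signed directions into $2d/L$ groups $G_1,\dots,G_{2d/L}$ of size $L$. For each group $G_u$ we want a set $P_u$ of about $n^{d-1}$ points such that every $p\in P_u$ sees nothing in the directions of $G_u$.

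\begin{itemize}
\item A group containing both $\pm e_j$ uses axis $j$ as a single-entry line.
\item A group containing only $+e_j$ needs $p$ to be the maximal point on its $j$-row.
\end{itemize}

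A natural choice is $P_u=\{x:\ \sum_j \epsilon_{u,j}x_j\equiv c_u \pmod n\}$, restricted to a sub-box where the one-sided conditions hold. Rows in the directions of $G_u$ must then also avoid the points of the other $P_v$ on the relevant side.

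Making the different $P_u$ compatible is the difficulty. Exactly as in the $k$-hub case, I would use random shifts and random sub-boxes together with a deletion step.

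\textbf{Fallback.} If the explicit approach gets messy, I would use the probabilistic scheme directly. View the rows as vertices of a $d$-partite $d$-uniform hypergraph whose edges are the points of $B$, with each row vertex having two "ends". A $1$-entry $p$ with ray set $D(p)$ claims the ends of its $L$ rays, and two points conflict if they claim the same end.

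I would choose points via a random greedy or nibble process (Rödl nibble / Pippenger–Spencer). This yields an almost perfect "packing", in which almost all $2dn^{d-1}$ row-ends are claimed by distinct points. It gives $(1-o(1))\frac{2d}{L}n^{d-1}$ points.

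The subtlety, and in my view the main obstacle, is that the visibility constraint is not a pure packing constraint. A point $q$ lying on the claimed ray of $p$, but claiming other ends, still destroys $p$'s ray. This is an induced-type condition. I would handle it through the induced packing result (Theorem~\ref{thm:induced_new}), applied to the ordered structure of ends, or by an alteration step: delete every point whose claimed ray is hit. The step to check is that the expected number of such deletions is $o(n^{d-1})$. This should follow because in the nibble each claimed ray meets only $O(1)$ chosen points in expectation, and the ordering can be arranged by placing each point at the correct extreme of its claimed rows.
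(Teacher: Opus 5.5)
\textbf{There is a genuine gap in the lower-bound construction.} Your upper bound is fine. The lower-bound construction, however, is never actually carried out, and the one concrete mechanism you offer for the key difficulty fails.

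\emph{The explicit plan is incomplete.} The ``diagonal layers'' approach covers only $L\mid 2d$. It also leaves the compatibility of the sets $P_u$ (``random shifts and random sub-boxes together with a deletion step'') completely unspecified.

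\emph{The alteration step in the fallback fails quantitatively.} The target set has density about $2d/(Ln)$, so a typical row carries $\Theta(1)$ chosen points on average. A claimed ray of length $\Theta(n)$ is therefore hit by another chosen point with probability bounded away from $0$. Your own estimate, that each claimed ray meets $O(1)$ chosen points in expectation, is precisely why deletion would cost $\Theta(n^{d-1})$ points rather than $o(n^{d-1})$.

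\emph{The ordering remedies do not work as stated.} A point has one fixed position on each of its $d$ rows, so it cannot be ``placed at the correct extreme'' of its claimed rows independently. Moreover, Theorem~\ref{thm:induced_new} is about unordered, partition-preserving embeddings. It says nothing about an ``ordered structure of ends'' unless you supply a way to encode orientation.

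\textbf{The missing idea is to eliminate orientation entirely.} Express rays through unordered line conditions that an induced edge-disjoint packing can enforce:
\begin{itemize}
\item a single-entry line gives its point two own rays;
\item a line containing exactly two points gives each of them one own ray (the outward one), whatever their order.
\end{itemize}
With this encoding the paper argues as follows.
\begin{itemize}
\item For even $L$ it uses $g_d(L,n)\ge \ex_d(\cS_{d+1-L/2},n)$ and Theorem~\ref{thm:k-hub}.
\item For $L=1$ it takes the boundary of the box, as in \eqref{eq:10}.
\item For odd $L=2\ell+1\ge 3$ it packs induced, edge-disjoint copies of a gadget $F'$ via Theorem~\ref{thm:induced_new}.
\end{itemize}
The gadget $F'$ is a $(d-1)$-uniform $d$-partite hypergraph on $d+1$ vertices. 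Its parts $U_1,\dots,U_{d-1}$ are singletons and $U_d=\{v_1,v_2\}$. Its edges are $(U\setminus U_\alpha\setminus U_d)\cup\{v_j\}$ for $1\le\alpha\le\ell$ and $j=1,2$, together with $U\setminus U_d$.

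Each copy yields two lattice points. These share a $d$-row containing only the two of them, and each lies on $\ell$ single-entry lines. Edge-disjointness together with $|W_i\cap W_j|\le d-1$ guarantees that no other chosen point lies on any of these lines. This gives $2\ell+1$ own rays per point and $(1-\delta)\,2dn^{d-1}/(2\ell+1)$ points in total. You correctly identified the induced-type obstacle, but without an unordered gadget of this kind your argument does not produce the required construction.
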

The proof is again probabilistic as for Theorem~\ref{thm:k-hub}.
Some cases are easy, e.g.,
\begin{equation}\label{eq:10}
g_d(1,n)=  n^{d}-(n-2)^{d}.
 \end{equation}

Indeed, for the lower bound one can take the boundary entries as $1$-entries to form the matrix $M$, while for the upper bound one can replace $1$-entries that are not on the boundary with boundary $1$-entries just like we did when proving~\eqref{claim:onestar}.

Also, for even $L$ we can use Theorem~\ref{thm:k-hub} and the obvious fact that
$$g_d(L,n)\geq \ex_d(\cS_{d+1-L/2},n)=(1+o(1)) \frac{d}{L/2} n^{d-1}. $$

We can rephrase this theorem in the language of forbidden submatrices. Let a {\em weak $K$-star} be a matrix with $(K+1)$ $1$-entries, one of them called a weak $K$-hub such that the remaining $K$ $1$-entries are neighboring this weak $K$-hub in some row. The difference compared to a $k$-star is that here we allow that two such $1$-entries are in the same row but on different sides of the hub $1$-entry. Let $\cS'_K$ be the family of all weak $K$-stars. Forbidding any member of $\cS'_K$ is equivalent to the previous problem with $L=2d-K+1$, so by Theorem \ref{thm:weak-k-hub}:
$$\ex_d(\cS'_K,n)= g_d(2d-K+1,n) =\frac{2d}{2d-K+1} n^{d-1}+o(n^{d-1})\text{ if }d,n\ge 2\text{ and }K\le 2d.$$

\section{Hypergraph packings}\label{sec:hypergraphs}

\subsection{Matchings}
A {\em hypergraph} $\cH$ is a family of sets $E(\cH)$ (called edges, or hyperedges) together with a (usually finite) set $V(\cH)$ (called vertices), such that each edge $e$ is a subset of $V(\cH)$. We denote $e(\cH)=|E(\cH)|$.
The hypergraph is $r$-{\em uniform} if every hyperedge has $r$ vertices.
The complete $r$-uniform hypergraph on the $n$ vertex set $A$ is denoted by $K_r(A)$, or sometimes $K_r(n)$.
It has exactly $\binom{n}{r}$ edges.
The  {\em degree} of a vertex $\deg_\cH(v)$ (resp., $\deg_\cH (B)$) is the number of hyperedges of $\cH$ that contain the vertex $v$ (resp., the subset $B\subset V(\cH)$). The co-degree of two vertices is also denoted by  $\deg_\cH(v,w):=\deg_\cH(\{v,w\})$.

A  {\em matching} $\cM$ in $\cH$ is a set of pairwise disjoint hyperedges, and let $\nu(\cH)$ denote its maximum cardinality, $\max |\cM|$.
So for $r$-uniform hypergraphs we have $\nu(\cH)\leq |V(\cH)|/r$.
One can find a large matching in a hypergraph $\cH$ (one which covers almost all vertices of $\cH$) if it is {\em almost regular} and {\em uncrowded}, i.e., it has small co-degrees. We will use the following result of Pippenger and Spencer (see also~\cite{franklrodl,furedimatchingsurvey}).

\begin{theorem}{\rm \cite{ps}}\label{thm:matching}
	For every $r\ge 2$ and $\eta>0$ there exist a $\eta'=\eta'(r, \eta)> 0$ and an $n_0$ such
	that the following holds for every $r$-uniform hypergraph $\cH$ on at least $n_0$ vertices. If 	 	
	\begin{itemize}
		\item $|\deg_\cH (v)- \Delta| \leq \eta'\Delta$ for every vertex $v\in V(\cH)$ and
		\item $\deg_\cH (v,w ) \le  \eta'\Delta$, for every pair $\{v, w\}\subset V(\cH)$
	\end{itemize}
	for some $\Delta$, then there is a matching in $\cH$ that leaves at most $\eta|V(\cH)|$ vertices uncovered, $\nu(\cH)\geq (1-\eta)|V(\cH)|/r$.
\end{theorem}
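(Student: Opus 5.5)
This is the Pippenger--Spencer theorem, which the paper quotes from~\cite{ps}; if one wanted to reprove it, I would use the Rödl nibble (semi-random method). Normalize so that all degrees are $(1\pm\eta')\Delta$ and all co-degrees are at most $\eta'\Delta$. Fix a small $\varepsilon>0$ depending on $r$ and $\eta$. The matching is built in a bounded number $t=t(r,\eta,\varepsilon)$ of rounds. In each round:
\begin{itemize}
\item every current edge is selected independently with probability $\varepsilon/\Delta$;
\item the selected edges that meet no other selected edge are added to the matching;
\item their vertices are deleted, together with every edge meeting a deleted vertex.
\end{itemize}
Since $t$ does not depend on $n$, only finitely many error terms accumulate. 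Each of them can be made arbitrarily small by taking $\eta'$ small and $n_0$ large.

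\emph{Step 1: one round, in expectation.} Fix a vertex $v$. It is covered in this round with probability about $\varepsilon e^{-r\varepsilon}$, up to a factor $1\pm O(\varepsilon^2+\eta')$. The point is that the edges through $v$ are nearly disjoint outside $v$: the co-degree condition implies that, for an edge $e\ni v$, the edges meeting $e$ number about $r\Delta$, with overlaps negligible in $\eta'$. Next take a surviving edge $e\ni v$. Each of its other $r-1$ vertices is deleted with probability $\approx\varepsilon e^{-r\varepsilon}$, almost independently, again by the small co-degrees. Hence the expected new degree of $v$, conditioned on $v$ surviving, is $\Delta_1\approx\Delta\,(1-\varepsilon e^{-r\varepsilon})^{r-1}$. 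Co-degrees can only decrease, so the ratio (co-degree)/$\Delta_1$ grows by at most a constant factor depending on $\varepsilon$ and $r$.

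\emph{Step 2: concentration.} I have to show that, with positive probability, simultaneously for every surviving vertex the new degree is $(1\pm\eta_1)\Delta_1$. The number of surviving vertices must also be $(1\pm\eta_1)$ times its expectation. Here $\eta_1$ is a small function of $\eta'$ and $\varepsilon$. The degree of $v$ after the round is determined by the random choices on edges within distance two of $v$, and each such choice changes it by only a bounded amount. So Azuma's (or Talagrand's) inequality gives deviation probabilities like $\exp(-c\,\Delta^{\delta})$. One then needs either $\Delta\to\infty$ (which may be assumed; otherwise the co-degree condition forces $\Delta$ to be bounded, and a direct argument applies), or the Lovász Local Lemma (dependencies are local and bounded by a polynomial in $\Delta$) to make all the good events hold together.

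\emph{Step 3: iterate and count.} After $i$ rounds the surviving fraction of vertices is about $(1-\varepsilon e^{-r\varepsilon})^{i}$. Choose $t$ so that this is below $\eta/2$. Then choose $\eta'$ small enough that the error parameters, which grow roughly like $\eta_{i+1}\le C(\varepsilon,r)(\eta_i+\varepsilon^2)$, keep the hypotheses valid through all $t$ rounds. The final error must also be absorbed into $\eta$. The main obstacle I expect is exactly this bookkeeping: the $O(\varepsilon^2)$ loss per round is tolerable only because the number of rounds is about $\varepsilon^{-1}\log(1/\eta)$. Consequently the errors must be tracked as relative errors of order $\varepsilon^2$ per round, not as absolute constants, so that the total loss is $O(\varepsilon\log(1/\eta))$, which is at most $\eta/2$ once $\varepsilon$ is chosen small. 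The vertices not covered by the matching are then at most $\eta|V(\cH)|$, giving $\nu(\cH)\ge(1-\eta)|V(\cH)|/r$.
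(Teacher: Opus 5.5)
The paper gives no proof of this statement; it quotes it from Pippenger and Spencer, so your outline has to stand on its own. The overall strategy is right: boundedly many nibble rounds, each preserving approximate regularity and small co-degrees. But Step~2 fails under the hypotheses as stated.

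\textbf{Why Step~2 fails.} Here $\eta'$ is a \emph{fixed} constant, co-degrees may be as large as $\eta'\Delta$, and $\Delta$ is unbounded. So whether a single vertex $w$ is covered can change the new degree of $v$ by up to $\deg(v,w)\le\eta'\Delta$. Also, toggling one edge can change the isolation status of many selected edges. Your bounded-differences claim is therefore false, and no bound of the form $\exp(-c\Delta^{\delta})$ holds.

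For a concrete example, let $r=3$. Take disjoint blocks $X_i,Y_i$ ($1\le i\le m$) of size $a=\lceil 1/\eta'\rceil$ and a set $Z$ of size $ma$. Let the edges be all triples $\{x,y,z\}$ with $x\in X_i$, $y\in Y_i$, $z\in Z$. This hypergraph is $\Delta$-regular with $\Delta=ma^2$, and all co-degrees are at most $\Delta/a\le\eta'\Delta$. Each block carries $a\Delta$ edges, so with probability about $e^{-\varepsilon a}$, independently over the blocks, none of them is selected. In that case every vertex of $X_i\cup Y_i$ survives with new degree about $\Delta(1-\pi)$ instead of $\Delta_1\approx\Delta(1-\pi)^2$, where $\pi\approx\varepsilon e^{-3\varepsilon}$. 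Hence, as $m\to\infty$, with probability $1-o(1)$ a positive fraction of the vertices violates any tolerance $\eta_1\le\pi/2$.

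A per-vertex failure probability that does not decay with $\Delta$ or $n$ defeats both the union bound and the Local Lemma, whose dependency degree grows with $\Delta$. The survivor count is moreover a global event, which the Local Lemma cannot handle anyway. There is also no ``bounded $\Delta$'' case to treat separately: any edge creates a co-degree at least $1$, so $\Delta\ge 1/\eta'$ automatically. Large $\Delta$ simply does not help here.

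\textbf{The missing idea.} This is the device used by Pippenger--Spencer (cf.\ the nibble lemma in Alon--Spencer): let the invariant tolerate exceptional vertices. Require only that all but $\delta|V|$ vertices have degree $(1\pm\delta)D$, that every degree is at most $KD$, and that co-degrees are below $\delta D$. Then prove the one-round lemma by the second moment method instead of exponential concentration. Small co-degrees give $\mathrm{Var}(\deg'(v))=O_{r,\varepsilon,K}(\delta)D^2$ for the new degree $\deg'(v)$. By Chebyshev each vertex goes bad with small probability, and by Markov only few vertices do. Chebyshev also controls the number of survivors, and these finitely many global events hold simultaneously with positive probability.

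\textbf{Step~3 bookkeeping.} A recursion $\eta_{i+1}\le C(\varepsilon,r)(\eta_i+\varepsilon^2)$ amplifies the $\varepsilon^2$ terms by $C^t$, which no choice of $\eta'$ can absorb. In the matching version there is no such term: using $\Delta\ge 1/\eta'$, one gets $\Pr[v\text{ covered}]=\varepsilon e^{-r\varepsilon}(1\pm O_{r,\varepsilon}(\eta'))$. So fix $\varepsilon$ and $t$ first, and then choose the tolerances backwards from the last round.
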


\subsection{Induced packings} 
 Given  $\cH$ and a subset $W\subset V(\cH)$.  The vertex set of the {\em induced subhypergraph} $\cH|W$ is $W$, and its edge set consists of all hyperedges of $\cH$ contained in $W$, $E(\cH|W):=\{e\in E(\cH): e\subset W \}$. Given an $r$-uniform hypergraph $\cF$ with $f$ edges, an {\em induced packing} of $\cF$ is a family of subsets $W_1, \dots , W_N$ of $V(\cH)$ such that each $\cH|W_i$ is isomorphic to  $\cF$ and these copies of $\cF$ are edge-disjoint.
Obviously, $N\leq |E(\cH)|/f$. E.g., if $n$ is even, $r=2$ and $\cH$ is the complete graph minus a perfect matching, $\{ e_1, \dots, e_{n/2}\}$ then the sets   $W_{i,j}:=e_i\cup e_j$ ($1 \leq i<j\leq n/2$) form an induced packing of $C_4$'s of size $\frac{1}{4} \binom{n}{2} +O(n)$.
Frankl and F\"uredi~\cite{FrFu-87} proved that similar large packings exist for every $r$-uniform hypergraph $\cF$.

\begin{theorem}\label{thm:induced1} {\rm \cite{FrFu-87}} {\rm [Maximum induced packings]}
Given an $r$-uniform hypergraph $\cF$ of  $f$ edges. For every $\delta>0$ there exists  an $n_0=n_0(\cF)$ such
	that the following holds.
For $n>n_0$ there exists a hypergraph $\cH$ on $n$ vertices, and subsets $W_1, \dots , W_N \subset V(\cH)$ such that
\\ ${}$\quad {\rm (i)}\enskip\enskip  each induced subhypergraph $\cH|W_i$ is isomorphic to $\cF$,
\\ ${}$\quad {\rm (ii)}\enskip  these copies are edge-disjoint, (we are having an induced packing),
\\ ${}$\quad {\rm (iii)} $|W_i\cap W_j| \le r$, for every pair $i \neq j$, and
\\ ${}$\quad {\rm (iv)}  $N> (1-\delta)\binom{n}{r}/f$.
\end{theorem}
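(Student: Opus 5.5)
The plan is the standard Rödl-nibble argument: start from the complete hypergraph, build an auxiliary hypergraph whose vertices are the $r$-subsets and whose edges encode candidate copies of $\cF$, and use Theorem~\ref{thm:matching} to find an almost perfect matching in it. Let $\cF$ have $v$ vertices and $f$ edges, and put $V=[n]$. Each $v$-subset $W\subset V$, together with a bijection $W\to V(\cF)$ (up to automorphism), determines a set $E_W$ of $f$ $r$-subsets of $W$, namely the images of the edges of $\cF$. Define the auxiliary $f$-uniform hypergraph $\cG$ on vertex set $K_r(V)$, so $\binom{n}{r}$ vertices, whose edges are all such sets $E_W$.

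First we check the two hypotheses of Theorem~\ref{thm:matching}. Every $r$-set $e$ lies in the same number $\Delta=\Theta(n^{v-r})$ of edges of $\cG$, by symmetry. Two distinct $r$-sets $e,e'$ span at least $r+1$ points, so they lie together in $O(n^{v-r-1})=o(\Delta)$ edges of $\cG$. Theorem~\ref{thm:matching} then gives a matching $E_{W_1},\dots,E_{W_N}$ in $\cG$ with $N\ge(1-\eta)\binom{n}{r}/f$.

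We cannot simply take $\cH=\bigcup E_{W_i}$, for two reasons. First, $\cH|W_i$ may contain extra edges coming from other copies. Second, (iii) may fail. So we refine the choice. We want to discard every copy $W_i$ for which either $|W_i\cap W_j|\ge r+1$ for some $j\ne i$, or some $E_{W_j}$ with $j\ne i$ has an edge inside $W_i$. The latter forces $|W_i\cap W_j|\ge r$ with the shared $r$-set being an edge of $E_{W_j}$ but a non-edge of $\cF$ on $W_i$. After discarding, we set $\cH=\bigcup_{\text{kept}}E_{W_i}$. Since kept copies meet in at most $r$ points, and any shared $r$-set is not an edge of the other kept copy, each $\cH|W_i$ is exactly $\cF$. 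Properties (i)–(iii) then hold, and (iv) holds provided only $o(n^r)$ copies are discarded.

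This is the main obstacle: Theorem~\ref{thm:matching} says nothing about such "bad pairs". I would handle it in one of two ways.

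\emph{(a) Randomized matching.} Use the version of the Pippenger–Spencer/Rödl nibble in which the matching is produced randomly and each edge of $\cG$ is chosen with probability about $1/\Delta$, and make the bad events negligible:
\begin{itemize}
\item The number of pairs of $v$-sets sharing at least $r+1$ points is $O(n^{2v-r-1})$. Each such pair is chosen with probability $\approx\Delta^{-2}=\Theta(n^{-2(v-r)})$, so the expected number of bad pairs is $O(n^{r-1})=o(n^r)$.
\item Pairs $W_i,W_j$ sharing exactly $r$ points $S$, where $S$ is an edge of the copy on $W_j$ but a non-edge of the copy on $W_i$, are a problem: their count is $\Theta(n^{2v-r})$, giving expected $\Theta(n^r)$ bad pairs, which is not negligible.
\end{itemize}

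\emph{(b) Enlarged auxiliary vertex set.} To fix the second type, I would let $\cG$ have vertex set $K_r(V)$ and let each $W$ cover \emph{all} $\binom{v}{r}$ $r$-subsets of $W$, both edges and non-edges of the copy. The matching is then a packing of $K_v$'s in which any two chosen $W$'s share at most $r-1$ points. This is stronger than (iii), and the induced property becomes automatic, because an $r$-set inside $W_i$ belongs to no other $W_j$.

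The price is counting. Such a packing has only about $\binom{n}{r}/\binom{v}{r}$ copies, which falls short of (iv) when $f<\binom{v}{r}$. So (b) alone is insufficient, and the genuine difficulty is to allow non-edges of one copy to be shared with edges of other copies.

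My actual plan therefore uses the first construction (vertex set of $\cG$ equal to the edge-slots only), run by a nibble. At each round, we choose random copies $E_W$ only among those $W$ whose non-edge $r$-sets are not yet used as edges of already chosen copies, and whose future edges avoid non-edges of already chosen copies. We track via concentration that degrees remain roughly regular, and that the forbidden structures remove only a vanishing fraction. Showing that this tracked process still covers a $(1-\delta)$ fraction of $K_r(V)$ is the step I expect to be hardest.
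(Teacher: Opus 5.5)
There is a genuine gap. You diagnose the two obstacles correctly, but the proposal stops exactly where the proof has to happen, and the process you sketch for that step would not work as described.

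\textbf{Why your nibble fails.} The inducedness problem is not a matter of a few bad pairs. In a matching produced by a nibble, whose copies are spread out like random ones, almost every non-edge slot of a chosen copy is an edge of some other chosen copy. So essentially all copies fail to be induced, not a negligible minority.

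Your tracked nibble forbids these conflicts on the fly. But if copies are picked uniformly among the admissible ones, then early on each chosen copy covers $f$ new $r$-sets while permanently blocking about $\binom{v}{r}-f$ fresh ones. Nothing steers later copies toward reusing already blocked $r$-sets as their non-edges. The blocked family then grows to a constant fraction of $K_r(V)$, and the process stalls with a constant fraction uncovered, not a $\delta$ fraction. Heuristically, for $\cF$ a single edge plus an isolated vertex and $r=2$, it stalls near half coverage, although in that case the theorem is trivial.

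Separately, for (iii) you rely on a randomized nibble in which a given pair of edges is chosen with probability about $\Delta^{-2}$. Theorem~\ref{thm:matching} provides no such information.

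\textbf{The missing idea: fix the non-edges in advance.} This is what the paper's argument does (adapted from Frankl--F\"uredi; see the proof of Theorem~\ref{thm:induced_new}).
\begin{enumerate}
\item Delete every $r$-set independently with a small constant probability $q<\delta/4$, and let $\cR$ be the surviving $r$-sets.
\item Only use vertex sets $W$ with $\cR|W\cong\cF$. That is, the edges of the copy are exactly the $r$-subsets of $W$ lying in $\cR$, and all its non-edges are deleted sets.
\item Edge-disjoint copies are then automatically induced in $\cH:=\cR$. Each deleted $r$-set serves as a non-edge for many copies, which is the sharing your option (b) showed is necessary. The loss is only about $q\binom{n}{r}$ $r$-sets.
\end{enumerate}

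\textbf{The design step.} To obtain (iii), and to make degree concentration a plain Chernoff bound, the candidate sets $W$ are restricted to the blocks of a near-regular design $\cQ$ of $v$-sets. Its blocks satisfy $|Q\cap Q'|\le r$, and every $r$-set lies in $d\pm\gamma$ blocks, with $d\approx n^{1/3}$; this is the non-partite analogue of Lemma~\ref{lemma:0}.
\begin{itemize}
\item Blocks through a fixed $R$ pairwise meet only in $R$. So, given $R\in\cR$, the events ``$Q$ induces a copy of $\cF$ in $\cR$ having $R$ as an edge'' depend on disjoint sets of coins. They are therefore independent, each with the same constant probability $h>0$.
\item Hence the auxiliary $f$-uniform hypergraph on $E(\cR)$ has all degrees $(1+o(1))hd$.
\item Its codegrees are at most $1$, since two distinct $r$-sets span at least $r+1$ points and so lie together in at most one block.
\end{itemize}
Theorem~\ref{thm:matching} then yields more than $(1-\delta)\binom{n}{r}/f$ pairwise intersecting in at most $r$ points, as (iii) requires.
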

Note that if $|W_i\cap W_j| = r$, then the set $(W_i\cap W_j)$ cannot be in $E(\cH)$.

An $r$-uniform hypergraph $\cH$ is {\em $k$-partite} with parts $V_1, \dots, V_k$ if the union of this partition is $V(\cH)$ and
 its hyperedges contain at most one vertex from each part. (We call these kind of subsets partial transversal sets).
If $\cH$ contains all of these $r$-sets then it is denoted by  $K_r(V_1, \dots , V_k)$ and called a complete $k$-partite $r$-uniform hypergraph.
In case of $n =|V_1|= \dots = |V_k|$ we use the shorthand $K_r^k(n)$, its size is $\binom{k}{r}n^r$.
The main result of this Section is to prove a $k$-partite version of Theorem~\ref{thm:induced1}.

\begin{theorem}\label{thm:induced_new}  {\rm [Maximum induced $k$-partite packings]}
Given an $r$-uniform $w$-partite hypergraph $\cF$ of $f$ edges with parts $U_1, \dots, U_w$, $b=|\cup U_j|$, $k\geq w$.
For every $\delta>0$ there exist an $n_0=n_0(\cF,k)$ such that the following holds.
For $n>n_0$ there exists an $r$-uniform $k$-partite $\cH$ on $k\times n$ vertices, with parts $V_1, \dots, V_k$, each of size $n$, and
 $b$-subsets $W_1, \dots , W_N \subset V(\cH)$ such that
\\ ${}$\quad {\rm (i)}\enskip\enskip each induced subhypergraph $\cH|W_i$ is isomorphic to $\cF$ with a {\em partition preserving} embedding, that is,
 there is an injection $\alpha_i:\{1,2,\dots, w\} \to \{1,2,\dots, k\}$ and another injection $\varphi_i: \cup U_j\to V(\cH)$ such that set $\varphi_i(U_j)$ is a subset of $V_{\alpha_i(j)}$ for each $j$, and  $\varphi_i(e)\in E(\cH)$ for each $e\in\cF$,
\\ ${}$\quad {\rm (ii)}\enskip  these copies are edge-disjoint, (we are having an induced packing),
\\ ${}$\quad {\rm (iii)} $|W_i\cap W_j| \le r$, for every pair $i \neq j$, and
\\ ${}$\quad {\rm (iv)}  $N> (1-\delta)\binom{k}{r}n^r/f$.
\end{theorem}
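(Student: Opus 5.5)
The plan is to mimic the Frankl--Füredi approach: build $\cH$ and the sets $W_i$ simultaneously, using Theorem~\ref{thm:matching} on an auxiliary hypergraph. Consider all partition-preserving \emph{placements} of $\cF$: a placement is a pair $(\alpha,\varphi)$ where $\alpha$ is an injection $\{1,\dots,w\}\to\{1,\dots,k\}$ and $\varphi$ is an injection of $\cup U_j$ into $V_1\cup\dots\cup V_k$ with $\varphi(U_j)\subset V_{\alpha(j)}$. Its \emph{trace} is the set of $r$-subsets $\varphi(e)$, $e\in\cF$, each a partial transversal of $K_r^k(n)$. We first reduce to the case where $\cF$ has no isolated vertices (isolated vertices can be placed afterwards, since $n$ is large). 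Next we keep only a random sparse subfamily of placements, which forces the condition $|W_i\cap W_j|\le r$. Concretely, define the auxiliary $f$-uniform hypergraph $\cG$ whose vertices are the edges of $K_r^k(n)$ and whose hyperedges are the traces $\varphi(E(\cF))$ of the placements.

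For the \textbf{regularity step}, note that every edge of $K_r^k(n)$ lies in roughly the same number $\Delta=\Theta(n^{b-r})$ of traces. The count depends only on which $r$ of the $k$ classes the edge meets. One can make it exactly uniform by summing over all $\alpha$ and averaging over the edge types of $\cF$, or by weighting placements by type. Any two distinct edges of $K_r^k(n)$ span at least $r+1$ vertices, so their codegree is $O(n^{b-r-1})=o(\Delta)$. Theorem~\ref{thm:matching} then gives a matching in $\cG$ covering all but $\eta\binom{k}{r}n^r$ vertices. That is, we get $(1-\eta)\binom{k}{r}n^r/f$ placements with pairwise disjoint traces, which yields (ii) and (iv).

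The \textbf{main obstacle} is inducedness together with (iii). Set $E(\cH)$ to be the union of the chosen traces. Then $\cH|W_i$ may contain extra edges, namely $r$-sets inside $W_i$ that are not in the trace of $W_i$ but belong to another trace. Such a set shares $r$ vertices with another $W_j$. If additionally $|W_i\cap W_j|\ge r+1$, problems compound. To handle this I would restrict, before applying Theorem~\ref{thm:matching}, to a random subfamily of placements: keep each with probability $p=cn^{r-b}\cdot n^{\epsilon}$-scale, chosen so that degrees remain concentrated around $p\Delta\to\infty$ (Chernoff plus a union bound over the $O(n^r)$ vertices). Then delete all placements involved in a ``bad configuration''. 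One kind of bad configuration is a pair of kept placements sharing at least $r+1$ vertices. The other kind is a placement $W$ together with a non-trace $r$-subset $T\subset W$, where $T$ is in the trace of another kept placement. The expected number of such configurations is $O(p^2 n^{2b-r-1})$, which is $o(p\,n^{b})$ when $p\ll n^{r+1-b}$, so only a negligible fraction of each vertex's degree is destroyed. This has to be checked degree-wise, possibly by a second deletion round or a nibble-type argument, so that the hypotheses of Theorem~\ref{thm:matching} persist.

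After this cleaning, any two remaining placements meet in at most $r$ vertices. The intersection cannot be an $r$-set lying in the trace of one placement but not of the other. Hence in the final matching every $r$-set inside $W_i$ that lies in $E(\cH)$ is already in the trace of $W_i$. Thus $\cH|W_i\cong\cF$ via $\varphi_i$, preserving partitions, which gives (i), while (iii) holds by construction. Choosing $\eta<\delta$ and $n_0$ large completes the plan.
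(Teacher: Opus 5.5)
There is a genuine gap in the inducedness step. Your regularity and codegree computations are fine; the failure is that your second kind of bad configuration is not rare.

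Take a kept placement $W$ and a transversal $r$-subset $T\subset W$ outside its trace. Such $T$ exist as soon as $\cF$ has a non-edge that is a partial transversal of its own parts. That is the only case where inducedness is an issue, and it is the case of $\cF_{d,\ell}$ in Section~\ref{sec:general}. The expected number of \emph{other} kept placements whose trace contains $T$ is about $p\Delta$, and you need $p\Delta\to\infty$ for degree concentration. So the number of such configurations is $\Theta(p^2n^{2b-r})$, not $O(p^2n^{2b-r-1})$; the latter is the count for pairs sharing $r+1$ vertices. This is $\Theta(p\Delta)$ times the number of kept placements, so deleting every placement involved destroys almost everything.

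The obstruction is structural. $E(\cH)$ is the union of the chosen traces, which covers a $(1-\eta)$-fraction of all transversal $r$-sets. Inducedness requires the non-trace transversal $r$-subsets of every chosen $W_i$ to lie in the small uncovered remainder, and Theorem~\ref{thm:matching} gives no control over which $r$-sets stay uncovered. The same problem defeats ``isolated vertices can be placed afterwards''. For $\cF$ a single edge plus an isolated vertex in a third class, taking all edges of $K_2^3(n)$ is a legitimate outcome of your matching step, and then no isolated vertex can be added inducedly.

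The missing idea, which is what the paper does, is to \emph{reserve non-edges before matching}. Delete from $E(K_r^k(n))$ a random family $\cR_0$, taking each $r$-set independently with a small constant probability $q<\delta/4$, and let $\cR$ be the rest. Admit a candidate vertex set $Q$ only if $\cR|Q$ is exactly a partition-preserving copy of $\cF$: its trace lies in $\cR$ and all other transversal $r$-subsets of $Q$ lie in $\cR_0$. This event has constant probability, so degrees in the auxiliary $f$-uniform hypergraph $\cJ$ on $\cR$ stay concentrated by Chernoff. Any matching in $\cJ$ then yields induced copies automatically, because $\cH$ uses only edges of $\cR$.

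Isolated vertices are handled by first padding $\cF$ to a balanced $k$-partite hypergraph with all parts of size $u$, so they belong to $Q$ from the start. The candidate sets $Q$ come from the generalized transversal design of Lemma~\ref{lemma:0}: random $ku$-sets with $u$ points in each class, after deleting pairs meeting in more than $r$ points. This corresponds to the sound part of your plan, the sparsification plus removal of pairs sharing $r+1$ vertices. The degree-wise control you left open is obtained there by excluding $6r+2$ members with small union, which forces every $r$-set to lie in at most $3r$ bad-pair unions. Pairwise intersections of size at most $r$ then give (iii) and codegrees at most $1$ in $\cJ$.
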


\subsection{The existence of generalized transversal designs}

For the proof of Theorem~{\rm \ref{thm:induced_new}}
we adapt (and simplify) the argument from~\cite{FrFu-87}.

A crucial notion we need is the following:
\begin{definition}
	A  {\em generalized transversal design} $T(n,k,u,r,d,\gamma)$, where $n,k,u,r,d$ are positive integers and $\gamma\geq 0$ is as follows. It is a $ku$-uniform family $\cQ$ on $kn$ elements, with parts  $V_1, \dots, V_k$, each of size $n$, with $|Q\cap V_i|=u$ for all $Q\in \cQ$ and $1\leq i \leq k$, $|Q\cap Q'|\leq r$ for distinct $Q,Q'\in \cQ$, and	
	$$|\deg_\cQ(R)-d|\leq \gamma$$ for all $r$-subsets  $R$ meeting $r$ distinct parts.	The value $\gamma$ is called the {\em tolerance} of $T$.
\end{definition}

\begin{lemma}\label{lemma:0}{\rm [The existence of designs with small tolerance]}
Suppose that $k,u,r$ are fixed, $d\leq n^{1/3}$,  $5\sqrt{kr d\log n}\leq \gamma \leq 2d$, and $n$ is sufficiently large
 $(n> n_0(k,u)$, $\gamma > 12r)$. Then there exists a generalized transversal design $T(n,k,u,r,d,\gamma)$.
\end{lemma}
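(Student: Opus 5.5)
We build the design by random selection followed by deletion, adapting the Frankl--F\"uredi argument. A \emph{candidate} is a $ku$-set $Q$ with $|Q\cap V_i|=u$ for every $i$; there are $\binom{n}{u}^k$ of them. Each candidate is included independently with probability $p$, and $p$ is chosen so that every partial transversal $r$-set $R$ (one meeting $r$ distinct parts) has expected degree about $d+\gamma/2$. The exact count is
\[
D_R:=\binom{n-1}{u-1}^r\binom{n}{u}^{k-r},
\]
so we set $p:=(d+\gamma/2)/D_R$. Since $u,k,r$ are fixed, this gives $p=\Theta(d\, n^{-(ku-r)})$. Call the resulting random family $\cQ_0$.

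Next we remove the pairs that violate $|Q\cap Q'|\le r$. We count the expected number of pairs $Q\ne Q'$ in $\cQ_0$ with $|Q\cap Q'|\ge r+1$. More to the point, for a fixed transversal $r$-set $R$ we count the pairs through $R$ that share at least $r+1$ elements. For $Q\ni R$, the number of candidates $Q'\ni R$ meeting $Q$ in at least one further element is $O(D_R/n)$. Hence the expected number of such bad pairs through $R$ is
\[
O\!\left(p^2 D_R\cdot D_R/n\right)=O(d^2/n),
\]
which is tiny because $d\le n^{1/3}$. We then need a concentration statement: with high probability, every $R$ lies in at most a constant number $c$ (say $c<3r$) of bad candidates. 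We prove this by a union bound. The probability that $R$ lies in $t$ bad sets of some fixed configuration is at most $(O(d^2/n))^{\Omega(t)}$, and there are only $O(n^{r})$ choices of $R$, so taking $t$ a large constant (depending on $r$) makes the union bound go through. This is where the hypothesis $\gamma>12r$ is used: we delete every $Q$ that belongs to some bad pair, and each degree drops by at most about $6r\le\gamma/2$.

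The degree concentration before deletion is a Chernoff bound. $\deg_{\cQ_0}(R)$ is a sum of independent indicators with mean $\mu=d+\gamma/2$. Chernoff gives
\[
P\bigl(|\deg(R)-\mu|>\gamma/2-6r\bigr)\le 2\exp\!\bigl(-(\gamma/3)^2/(3\mu)\bigr).
\]
Since $\gamma\le 2d$ we have $\mu\le 2d$, and $\gamma^2\ge 25krd\log n$ then makes this probability at most $n^{-kr-1}$ or so. The number of transversal $r$-sets is at most $\binom{k}{r}n^r$, so the union bound succeeds. After the deletion every degree lies within $\gamma/2+6r\le\gamma$ of $d$ (after recentering: the mean was chosen at $d+\gamma/2$, and the window $[d-\gamma,d+\gamma]$ absorbs both the downward loss from deletion and the Chernoff deviation if the constants are tracked). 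This yields $T(n,k,u,r,d,\gamma)$.

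The main obstacle is the deletion step. We need, uniformly over all $R$, that only $O(r)$ sets through $R$ are destroyed. A first-moment bound alone is not enough; we need a maximal-disjoint-family style argument, in the spirit of the Erd\H{o}s--Tetali or Frankl--R\"odl estimates, showing that no $R$ is ``crowded'' by many overlapping bad pairs. Our first attempt would be to bound the probability that some $R$ lies in a sunflower-like cluster of $t$ bad configurations, using $p^{\,\#\text{sets}}$ times the number of configurations. The exponent condition $d\le n^{1/3}$ should make each added set cost a factor $n^{-\Omega(1)}$.
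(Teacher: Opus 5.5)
Your architecture matches the paper's: random $ku$-sets, then Chernoff plus a union bound for the degrees, then deletion of bad pairs with a uniform loss of about $6r<\gamma/2$ at each $R$. But the deletion step, which is the only non-routine part, is not proved. You call it the main obstacle and support it only with the heuristic that each added set costs a factor $n^{-\Omega(1)}$, and that heuristic is false. A set $Q_j\ni R$ that meets the previously counted sets only in $R$ contributes $D_R\,p=\Theta(d)$ to the expected count, which is a growing factor. The saving of $1/n$ comes solely from the partner relation $|Q\cap Q'|\ge r+1$. Under your rule (delete every set lying in some bad pair), partners may be shared among many $Q_j$, or may themselves be other $Q_j$'s, so the counting must be organized. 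One way is to count along a spanning forest of the bad-pair graph, all of whose components have at least two sets. This gives union size at most $r+b(ku-r)-b/2$ for $b$ sets, and hence an expected count $O(d^b n^{-b/2})=O(n^{-b/6})$ per $R$, which beats the $O(n^r)$ choices of $R$ once $t\ge 6r+1$.

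The paper avoids this bookkeeping by removing bad pairs one at a time. The deleted sets then form disjoint pairs $\{Q_i,Q_i'\}$ with $|Q_i\cup Q_i'|\le 2ku-r-1$. If $R$ lay in $3r+1$ of these unions, the $6r+2$ distinct sets would span at most $r+(3r+1)(2ku-2r-1)=(ku-r)(6r+2)-(2r+1)$ points. A single global first-moment bound, $v^a2^{ab}p^b=O(d^{6r+2}/n^{2r+1})=O(n^{-1/3})$ using $d\le n^{1/3}$, excludes all such families at once, so $\deg_{\cB}(R)\le 6r<\gamma/2$. This deterministic step, turning many deleted sets through $R$ into a bounded family whose union deficit is linear in its size, is the missing idea.

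Separately, your Chernoff estimate is wrong as stated. With $\mu=d+\gamma/2\le 2d$ and deviation $\gamma/3$, the exponent is $\gamma^2/(27\mu)\ge\gamma^2/(54d)\ge\frac{25}{54}kr\log n$, not $(kr+1)\log n$. For $k\le 2$ this does not even beat the $\binom{k}{r}n^r$ terms of the union bound. The recentering is unnecessary: take $p=d/D_R$ and deviation $\gamma/2$, as the paper does, and the exponent becomes $\gamma^2/(12d)\ge\frac{25}{12}kr\log n>(r+1)\log n$. Since deletion only lowers degrees, and by at most $6r<\gamma/2$, every final degree lies in $[d-\gamma,d+\gamma]$.
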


The existence of perfect transversal designs is a well-known research topic of combinatorics.
Very likely a more precise theorem can be proved by the advanced methods of~\cite{GKLO}
(by iterative absorption) and~\cite{Keevash-I,Keevash-II} (by randomised algebraic constructions)
but for our purpose the present weaker version is sufficient,

To prove Lemma \ref{lemma:0} we use only standard random methods. We need Chernoff's bound in the following form (see e.g.,~\cite{BBbook} page 12). 
An indicator (or Bernoulli) variable $Y_i$ is a random variable taking only the values $0$ and $1$ with probabilities
 $\text{\rm Prob} (Y_i=1) =p_i$ and $\text{\rm Prob} (Y_i=0) =1-p_i$.

\begin{theorem}\label{thm:chernoff}
	Suppose that $Y$ is the sum of mutually independent indicator random variables $Y_1, \dots, Y_N$
with probabilities  $0\leq p_1, \dots, p_N\leq 1$,   $\mu:=\EE(Y)$, and $0\leq \varepsilon \leq 1$.
Then $$\text{\rm Prob}\big[|Y-\mu|>\varepsilon \mu\big]<2e^{-\varepsilon^2 \mu/3}.$$
\end{theorem}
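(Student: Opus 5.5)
We would follow the standard exponential-moment (Bernstein--Chernoff) method. We bound the two tails $\text{\rm Prob}[Y\geq(1+\varepsilon)\mu]$ and $\text{\rm Prob}[Y\leq(1-\varepsilon)\mu]$ separately, each by $e^{-\varepsilon^2\mu/3}$, and then take a union bound.

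First we dispose of the degenerate cases. If $\mu=0$, then all $p_i=0$, so $Y=0$ almost surely and the probability is $0<2$. If $\varepsilon=0$, the probability is at most $1<2$. So assume $\mu>0$ and $0<\varepsilon\leq 1$. Because the events in question are strict inequalities $|Y-\mu|>\varepsilon\mu$, it suffices to bound the closed tails.

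\textbf{Upper tail.} For $t>0$, Markov's inequality applied to $e^{tY}$ gives
\[\text{\rm Prob}[Y\geq(1+\varepsilon)\mu]\leq e^{-t(1+\varepsilon)\mu}\,\EE\, e^{tY}.\]
By independence, $\EE\, e^{tY}=\prod_i\bigl(1+p_i(e^t-1)\bigr)\leq \exp\bigl(\mu(e^t-1)\bigr)$, using $1+x\leq e^x$. Choosing $t=\ln(1+\varepsilon)$ yields the bound $\exp\bigl(-\mu[(1+\varepsilon)\ln(1+\varepsilon)-\varepsilon]\bigr)$.

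\textbf{Lower tail.} Similarly, for $t>0$ apply Markov's inequality to $e^{-tY}$. This gives $\exp\bigl(t(1-\varepsilon)\mu+\mu(e^{-t}-1)\bigr)$, and for $\varepsilon<1$ we choose $t=-\ln(1-\varepsilon)$. The result is $\exp\bigl(-\mu[\varepsilon+(1-\varepsilon)\ln(1-\varepsilon)]\bigr)$. When $\varepsilon=1$, the lower event $Y<0$ is impossible, so there is nothing to prove.

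\textbf{Calculus inequalities (the main obstacle).} It remains to show, for $0<\varepsilon\leq 1$, that
\[ g(\varepsilon):=(1+\varepsilon)\ln(1+\varepsilon)-\varepsilon-\varepsilon^2/3\geq 0 \quad\text{and}\quad h(\varepsilon):=\varepsilon+(1-\varepsilon)\ln(1-\varepsilon)-\varepsilon^2/2\geq 0 .\]
For $h$, we have $h(0)=0$, $h'(\varepsilon)=-\ln(1-\varepsilon)-\varepsilon\geq 0$, so $h\geq0$. Since $\varepsilon^2/2>\varepsilon^2/3$, the lower tail is then at most $e^{-\varepsilon^2\mu/2}\le e^{-\varepsilon^2\mu/3}$.

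For $g$, we have $g(0)=0$ and $g'(\varepsilon)=\ln(1+\varepsilon)-2\varepsilon/3$, with $g'(0)=0$. Moreover $g''(\varepsilon)=1/(1+\varepsilon)-2/3$, which is positive on $[0,1/2)$ and negative afterwards. So $g'$ first increases and then decreases on $[0,1]$. Since $g'(1)=\ln 2-2/3>0$, this gives $g'\geq0$ on $[0,1]$, hence $g\geq 0$. This is the only point where the restriction $\varepsilon\leq1$ enters, and it is why the constant is $3$ rather than $2$.

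Combining the two tails gives $\text{\rm Prob}[|Y-\mu|>\varepsilon\mu]\leq 2e^{-\varepsilon^2\mu/3}$. The inequality is strict because $h(\varepsilon)>0$ for $\varepsilon>0$, so the lower-tail bound $e^{-\varepsilon^2\mu/2}$ is strictly smaller than $e^{-\varepsilon^2\mu/3}$ when $\mu>0$.
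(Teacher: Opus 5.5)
The paper does not prove this statement; it quotes the bound from \cite{BBbook}. Your exponential-moment argument is the standard textbook proof, and it is correct as written. The following steps all check out:
\begin{itemize}
\item Markov's inequality applied to $e^{\pm tY}$, together with independence and $1+x\le e^x$.
\item The choices $t=\ln(1+\varepsilon)$ and $t=-\ln(1-\varepsilon)$.
\item The separate handling of $\mu=0$ and $\varepsilon=0$, and of $\varepsilon=1$, where the strict lower event $Y<0$ is empty.
\item The derivative analysis: $h'(\varepsilon)=-\ln(1-\varepsilon)-\varepsilon\ge 0$, and $g'$ increases up to $\varepsilon=1/2$ and then decreases to $g'(1)=\ln 2-2/3>0$.
\end{itemize}

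One wording slip is in the last sentence. The step ``$h(\varepsilon)>0$, so $e^{-\varepsilon^2\mu/2}<e^{-\varepsilon^2\mu/3}$'' is a non sequitur. The latter inequality follows simply from $1/2>1/3$ when $\mu,\varepsilon>0$. However, $h(\varepsilon)>0$, or equally $g(\varepsilon)>0$, also makes the corresponding tail bound strict. So the strict conclusion stands either way.
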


\begin{proof}[Proof of Lemma~{\rm \ref{lemma:0}}]
	
Recall that  $K_r^k(n)$ is an $r$-uniform hypergraph with vertex set $V:=V_1\cup \dots \cup V_k$
 where $|V_1|= \dots = |V_k|=n$ and its edges are the $r$-element transversals, so $e(K_r^k(n))=\binom{k}{r}n^{r}$.
Define $K^{k,u}(n)$ on the same vertex set with $ku$ element edges $e$ such that $|e\cap V_i|=u$ for all $1\leq i\leq k$,
 so $e(K^{k,u}(n))=\binom{n}{u}^{k}$.
Define a probability space on $E(K^{k,u}(n))$, choose each edge randomly and mutually independently with probability
  $p:=d/\binom{n-1}{u-1}^{r}\binom{n}{u}^{k-r}$.
Let $\cQ$ denote the obtained random subfamily.
For every edge  $R\in E(K_r^k(n))$ we get $ E(\deg_\cQ(R))=d$.

\begin{claim}\label{Claim1:r-sets}
The probability that there exists any $R\in E(K_r^k(n))$ with $|\deg_\cQ(R)-d|>\frac{1}{2}\gamma$ is less than $1/3.$
\end{claim}
\begin{proof} 
For each $R$ Theorem~\ref{thm:chernoff} implies (with $N:=\deg(R)$ in $E(K^{k,u}(n))$, i.e., $N=\binom{n-1}{u-1}^{r}\binom{n}{u}^{k-r}$, $\mu:=d$, and $\varepsilon:=\gamma/(2d)$, therefore $0\le \varepsilon \le 1$) that 
$$
\text{\rm Prob}\big[|\deg_\cQ(R)-d|>\frac{1}{2}\gamma\big]<2e^{-\gamma^2 /(12d)}.$$
	Since $\gamma^2 \ge 25 krd\log n> 12d(r+1) \log n $, the right hand side is smaller than $2 n^{-(r+1)} \leq \frac{1}{3} 2^{-k} n^{-r}\leq 1/(3 e(K_r^k(n)))$
	for $n\geq 6\times  2^{k}$.	Taking the union bound for every $R$ finishes the proof. 
\end{proof}
In fact, the right hand side here is $o(1)$ as $n\to \infty$, but we do not need that.
Similar bounds can be shown to all $r$-subsets of $V$, but we do not need those either.

\begin{claim}\label{Claim2:6r+2-sets}
The probability that there exists a $\cQ'\subset \cQ$ of size $|\cQ'|=6r+2$ with a relatively small union
 $|\cup_{Q\in \cQ'} Q|\leq (ku-r)(6r+2)-(2r+1)$  is less than $1/3.$
\end{claim}
\begin{proof}
Call a  family $\cC$ an $(a,b,c)$-system if $|\cup \cC|\leq a$, $|\cC|=b$, and $|C|=c$ for each $C\in \cC$.
We give a very generous upper bound for the number of $(a,b,c)$ systems on $v$ vertices.
It is at most
$$  \binom{v}{a}\binom{\binom{a}{c}}{b} < v^a \binom{a}{c}^b < v^a 2^{ab}.
$$
We use this formula with $v:=kn$, $a:=(ku-r)(6r+2)-(2r+1)$,  $b:=(6r+2)$, and $c:=ku$.
The probability that a given configuration is occurring in $\cQ$ is $p^b$ (or $0$),
 so the total probability in Claim~\ref{Claim2:6r+2-sets} is at most  $p^b v^a 2^{ab}$.
Since $p^b=O(d^{6r+2}/n^{(ku-r)(6r+2)})$ and $v^a=O(n^{(ku-r)(6r+2)-(2r+1)})$ this is at most
 $\alpha(k,u)\times (d^{6r+2}/n^{2r+1})$, where $\alpha(k,u)$ is a constant depending only on these two variables.
This is $o(1)$ when $n\to \infty$ since $d\leq n^{1/3}$.
\end{proof}

Choose a family $\cQ\subset E(K^{k,u}(n))$ at random as described in Claim~\ref{Claim1:r-sets}.
The sum of probabilities in Claim~\ref{Claim1:r-sets} and in Claim~\ref{Claim2:6r+2-sets} is less than $1$ so we may suppose that
 there exists a family $\cQ$ avoiding all small dense parts  described in  Claim~\ref{Claim2:6r+2-sets} and satisfying
 $$|\deg_\cQ(R)-d| \leq \frac{1}{2}\gamma$$
for all  $R\in E(K_r^k(n))$.

Call a pair of sets $Q,Q'\in \cQ$ a {\em bad pair} if $|Q\cap Q'|\geq r+1$.
Remove the bad pairs from $\cQ$ consecutively till no more are left.
Suppose that $Q_1,Q_1'$, $Q_2,Q_2', \dots$, $Q_m,Q_m'$ are bad pairs, they form the hypergraph $\cB\subset \cQ$,  $|\cB|=2m$.
Define the (multi)hypergaph $\cM$ with $m$ edges of the form  $Q_i\cup Q_i'$. Note that each such union has at most $2ku-r-1$ elements.
We claim that for each $r$-set $R$ one has $\deg_\cM(R)\leq 3r$.

Suppose, on the contrary, that $R$ is contained in $Q_1\cup Q_1'$, $Q_2\cup Q_2', \dots$, $Q_{3r+1}\cup Q_{3r+1}'$.
Consider their union
\begin{multline*}
  |\cup_{1\leq i\leq 3r+1} (Q_i\cup Q_i')| \leq |R| + \sum_i  |Q_i\cup Q_i'\setminus R| \\
   \leq r+ (3r+1)(2ku-2r-1)= (ku-r)(6r+2)-(2r+1).
\end{multline*}
This contradicts the choice of $\cQ$, (namely that it avoids the configurations in Claim~\ref{Claim2:6r+2-sets}).

The upper bound  $\deg_\cM(R)\leq 3r$ implies that $\deg_\cB(R)\leq 6r$.  This is less than $\gamma /2$, so the family
 $\cQ' :=\cQ\setminus \cB$ is indeed a $T(n,k,u,r,d,\gamma)$ transversal design.
\end{proof}

\subsection{Proof of Theorem~{\rm \ref{thm:induced_new}}}

It is enough to deal with the case when $\cF$ is a {\em balanced} $r$-uniform $k$-partite hypergraph.
A hypergraph $\cG$ with parts $W_1, \dots, W_k$ is called balanced if $|W_1|= \dots =|W_k|$.
Given a $w$-partite hypergraph $\cF$ of $f$ edges with parts $U_1, \dots, U_w$ ($w\leq k$), one can turn it to a balanced hypergraph $\cF'$
 with a vertex set $W'$ by adding extra isolated vertices.
If we can find $(1-\delta)\binom{k}{r}n^r/f$ copies of $\cF'$ induced by $W'_1, \dots$  as described above in Theorem~\ref{thm:induced_new},
 then obviously,  each $W'_i$ can be pruned into an induced copy of $\cF$.

From now on, we suppose that $\cF$ is balanced, so $u:=|U_1|= \dots =|U_k|$.
Consider  $K_r^k(u)$ with parts $U_1', \dots, U_k'$ and fix $u_i'\in U_i'$ for $1\leq i \leq r$.
Let $\beta$ denote the number of subhypergraphs $\cF'\subset E(K_r^k(u))$ such that $\{u_1', \dots, u_r'\}\in \cF'$ and $\cF'$ is isomorphic to $\cF$ with a partition preserving isomorphism (i.e., one satisfying Condition~(i) of Theorem~{\rm \ref{thm:induced_new}}).
Obviously, this $\beta$ is the same for every other edge of $K_r^k(u)$.
So every $K_r^k(u)$ contains exactly $\beta \binom{k}{r} u^r/f$ partition preserving copies of $\cF$. In addition, since $\cF$  can be embedded into  $K_r^k (u) $, we have  $1\leq \beta  \leq  \binom{  \binom{k}{r} u^r  -1 }{f-1}$, note that the upper bound depends only on $\cF$.

Define $d:=\lfloor n^{1/3}\rfloor$, $\gamma:=\lceil n^{1/5}\rceil$. By Lemma~\ref{lemma:0}
 there exists a generalized transversal design $\cQ$ with parameters $(n,k,u,r,d,\gamma)$ for large enough $n$.
We know that
\begin{equation}\label{eq12} 
    {d-\gamma\leq  \deg_\cQ(R)\leq d+\gamma}
  \end{equation} 
for all $R\in E(K_r^k(n))$.

Define a probability space on $E(K_r^k(n))$, choose each edge randomly and mutually independently with probability
  $q$ (its value is defined later as a small constant independent of $n$).
Let $\cR_0$ denote the obtained random subfamily and let $\cR:=E(K_r^k(n))\setminus \cR_0$.
We have $E(|\cR_0|)= q\times e(K_r^k(n)) = q \binom{k}{r} n^r$.
Then
Theorem~\ref{thm:chernoff} implies (with $\varepsilon:=1$) that
\begin{equation}\label{eq13} \text{\rm Prob}\big[|  \cR_0|> 2q \binom{k}{r} n^r \big]
 < 2\exp \big[- q \binom{k}{r} n^r/3 \big]=o(1).
 \end{equation}

Given $\cR$ and $\cQ$ define an $f$-uniform hypergraph $\cJ$ as follows.
The vertex set $V(\cJ):=E(\cR)$ and the hyperedges are those copies of $\cF$ which are spanned by a $Q\in \cQ$
 by a partition preserving isomorphism, in particular $\cF \sim \cR|Q$.
The probability that a given $R\in \cR$, $R\subset Q\in \cQ$ belongs to a hyperedge $J\in \cJ$ generated by $Q$ is exactly
$$
  h:=  \beta \times  (1-q)^{f-1} q^{\binom{k}{r} u^r -f}.
  $$
For each $R\in \cR$ we have $E(\deg_\cJ(R)) = h\deg_\cQ(R)$. Let
   $\gamma_1(R)=\sqrt{3h(r+1)\log n \deg_\cQ(R) }$. For large $n$ $\gamma_1(R)$ is less than $h\deg_\cQ(R)$, in fact it is $o(h\deg_\cQ(R))$ as $n\to \infty$ using \eqref{eq12} and that $d=\lfloor n^{1/3}\rfloor$, $\gamma=\lceil n^{1/5}\rceil$.

Theorem~\ref{thm:chernoff} implies that
\begin{multline} \text{\rm Prob}\big[\exists R \in E(K_r^k(n)), R\in \cR \enskip \text{\rm such that }|  \deg_\cJ(R)- h \deg_\cQ(R) |>\gamma_1(R)\big] \label{eq14}\\
 <\binom{k}{r}n^r \times 2\exp \big[-(\gamma_1)^2 /(3h\deg_\cQ(R))\big] = O(1/n)=o(1).
 \end{multline}
 The sum of probabilities in~\eqref{eq13} and~\eqref{eq14} is $o(1)$, so using ~\eqref{eq12} we get that for large enough $n$ there is a choice of $\cR$ such that for every $R$
\begin{equation}\label{eq15}  |\deg_\cJ(R)- h d |\leq h\gamma + \gamma_1.
 \end{equation}

Notice that a matching $\cM$ in $\cJ$, generated by the hyperedges $W_1, \dots, W_N\in \cQ$, yields edge-disjoint, induced copies of $\cF$ in $\cR$.
Let us check if the conditions of Theorem~\ref{thm:matching} hold.
Recall that $\delta$ was a fixed parameter in the theorem. Choose $q$ such that $q<\delta/4$, set $\Delta:=hd$ and define $\varepsilon:=\eta'(f, \delta/2)$.
Equation~\eqref{eq13} implies that $|V(\cJ)|=|E(\cR)|\geq (1-\delta/2) \binom{k}{r} n^r$. We want to apply Theorem~\ref{thm:matching} on $\cJ$ with $``\mu"$ being $\delta/2$ and $``r"$ being $f$.
First, ~\eqref{eq15} implies that in $\cJ$ the degree of every $R$ differs from $\Delta$ by at most $h\gamma+\gamma_1\le \varepsilon hd =\varepsilon \Delta$ for large $n$ (as $\varepsilon$ is a constant, $d=\lfloor n^{1/3}\rfloor$, $\gamma=\lceil n^{1/5}\rceil$, and $\gamma_1$ is small due to $\gamma_1=o(h\deg_\cQ(R))$ combined with~\eqref{eq12}), as required.
Second, since $|Q\cap Q'|\leq r$ for distinct $Q,Q'\in \cQ$ in the transversal design $\cQ$, we get that for every
co-degree we have $\deg_{\cJ}(R, R')\leq 1\leq \varepsilon \Delta=\varepsilon hd$ for large $n$ (as $\varepsilon$ and $h$ are constants and $d=\lfloor n^{1/3}\rfloor$), as required.
Thus we can indeed apply Theorem~\ref{thm:matching} to get a matching of size at least
$$ N\geq (1-\delta/2) |V(\cJ)|/f\geq (1-\delta/2)^2 e(K_r^k(n))/f\geq (1-\delta)\binom{k}{r}n^r/f
  ,$$ which gives this many edge-disjoint induced copies of $\cF$ in $\cR$, finishing the proof of Theorem \ref{thm:induced_new}.

\section{The general case, Proofs of Theorem~\ref{thm:k-hub} and~\ref{thm:weak-k-hub}}\label{sec:general}

\begin{proof}[Proof of Theorem~{\rm \ref{thm:k-hub}}]
	The upper bound follows from~\eqref{claim:k-hub-upper}. We have a matching lower bound by~\eqref{claim:k-hub-lower-construction} whenever $d$ is divisible by $d-k+1$ including the cases $k=1$  (see~\eqref{claim:onehub})
 and $k=d$ (see~\eqref{d-recursion}). We are left to show that one can find a matrix of size $n$ and dimension $d$ with sufficiently many $1$-entries for the case $1<k<d$.
Recall that the problem is equivalent to finding a large enough subset $P\subset B_d(n)$
 such that every $x\in P$ has $\ell:=(d-k+1)$ axis parallel lines containing $x$ and no other element of $P$ (i.e., there are (at least) $\ell$ single-entry lines through $x$), $2\leq \ell\leq d-1$.
We construct an appropriate $P$ as follows.

Let $F:=\cF_{d,\ell}$ be the unique $(d-1)$ uniform hypergraph having $d$ vertices and $\ell$ hyperedges.
Recall that  $K_{d-1}^d(n)$ is a $(d-1)$-uniform hypergraph with vertex set $V_1\cup \dots \cup V_d$
 where $|V_1|= \dots = |V_k|=n$ and its edges are the $(d-1)$-element transversals, so $e(K_{d-1}^d(n))=dn^{d-1}$.
According to Theorem~\ref{thm:induced_new} for every $\delta>0$ there exists an $n_0=n_0(d)$ such that for $n>n_0$ there exists a subhypergraph
$\cH\subset K_{d-1}^d(n)$ of size at least  $(1-\delta) dn^{d-1}$ and
 subsets $W_1, \dots , W_N \subset V(\cH)$ such that
 each induced subhypergraph $\cH|W_i$ is isomorphic to $F$,
 these copies are edge-disjoint, and $E(\cH)=\cup E(\cH|W_i)$.

Given such a collection of copies of $F$, there is a corresponding set of lattice points $P\subset B_d(n)$ such that
 $p_i=(x_1,\ldots ,x_d)\in P$ if and only if the vertex set $W_i$ meets $V_\alpha$ in the $x_i$th vertex ($1\leq \alpha\leq d$).
 The size of $P$ equals to the number of copies of $F$, i.e., it is at least  $(1-\delta) dn^{d-1}/\ell$.

 We claim that for every $p_i\in P$ there are $\ell$  $1$-entry  lines containing it. Indeed,
 suppose that $E(\cH|W_i)=\{ W_i\setminus x_\alpha: \alpha\in A_i \}$, where $|A_i|= \ell$ a set given by the embedding of  $F$ into $W_i$.
 Consider a hyperedge $h:=W_i\setminus x_\alpha$ corresponding to $\cH|W_i$, (here $\alpha\in A_i$).
 Suppose, on the contrary, that the line $r(p_i, \alpha)$ contains another lattice point $p_j\in P$. Then $h=W_i\cap W_j$.
 Since $h\in E(\cH|W_i)\subset E(\cH)$, the edge sets $E(\cH|W_i)$ and $E(\cH|W_j)$ are not disjoint, a contradiction. 
\end{proof}

\begin{proof}[Proof of Theorem~{\rm \ref{thm:weak-k-hub}}]
	The upper bound follows from~\eqref{claim:k-ray-upper}. We have a matching lower bound
for even $L$ using Theorem~\ref{thm:k-hub}, and also for $L=1$ by~\eqref{eq:10}. We are left to show that one can find a
large enough subset $P\subset B_d(n)$ for the case $L=2\ell+1, \, 1\leq \ell<d$.

Let $F'=\cF'_{d,\ell}$ be a $(d-1)$-uniform $d$-partite hypergraph
 with parts $U_1, \dots, U_d$, $|U_j|=1$ for $1\leq j \le d-1$, $U_d=\{ v_1,v_2\}$, $U:=\cup U_i$, and having $2\ell+1$  hyperedges, namely
the $(d-1)$-element sets of the form $U\setminus U_\alpha \cup \{v_j\}$ for   $1\leq \alpha\leq \ell$ and $j\in \{1,2\}$  and the set  $U\setminus U_d$.
According to Theorem~\ref{thm:induced_new} for every $\delta>0$ there exists an $n_0=n_0(d)$ such that for $n>n_0$ there exists a subhypergraph $\cH'\subset K_{d-1}^d(n)$ of size at least  $(1-\delta) dn^{d-1}$ and
 subsets $W_1, \dots , W_N \subset V(\cH)$ such that
 each induced subhypergraph $\cH|W_i$ is isomorphic to $F'$,
 these copies are edge-disjoint, and $E(\cH)=\cup E(\cH|W_i)$. Suppose $\varphi_i: U\to V(\cH)$ yields $\cH|W_i$.

 Given such a collection of copies of $F'$,
 one can define a corresponding set of lattice points $P\subset B_d(n)$ such that
each $W_i$ gives exactly two lattice points, since each $W_i$ contains two $d$-transversals.
Since $|W_i\cap W_j|<d$, the size of $P$ is twice the number of copies of $F'$, i.e., it is at least  $(1-\delta) 2dn^{d-1}/(2\ell+1)$.

 We claim that for every $p\in P$ there are $\ell$  $1$-entry  lines containing it. Indeed,
if  $p=\varphi_i(U\setminus U_d \cup \{v_j\})$ then
  for each $1\leq \alpha\leq \ell$ the row determined by $\varphi_i(U\setminus U_\alpha\setminus U_d \cup \{v_j\})$ is a $1$-entry  line.
  This gives $2\ell$ rays emanating from $p$.
Furthermore, the line defined by $\varphi_i(U\setminus U_d)$ is a line with exactly two $1$-entries, it covers only
  $\varphi_i(U\setminus U_d \cup \{v_1\})$ and $\varphi_i(U\setminus U_d \cup \{v_2\})$, so we got one more free ray from $p$.
\end{proof}

\section{$3$-dimensional matrices, The proof of Theorem~\ref{thm:3-dim-no-3-hubs}}\label{sec:3dim}

We introduce some definitions specific to $3$ dimensional matrices. Let $P$ be the set of $1$-entries of $M$.
We refer to the row containing an entry $p\in M$ that has the first, second, third coordinate non-fixed as $X(p)$, $Y(p)$, $Z(p)$, respectively. We refer to the $2$ dimensional cross section through $p$ that has the first, second, third coordinate fixed as $YZ(p)$, $XZ(p)$, $XY(p)$ and call such cross sections as a $YZ$-cross section, $XZ$-cross section, $XY$-cross section, respectively.
Given an entry $p=(a,b,c)\in M$ we also use the notation $(*,b,c)$ for $X(p)$, etc, whichever notation is more convenient.

Let $f(n_1, n_2, n_3)$ denote the function defined in~\eqref{f3d}. We have to show that the weight of an $\cS_3$-free matrix $M$ is at most $f(n_1, n_2, n_3)$ (whenever every $n_i\geq 2$). 

We prove by induction on $\sum n_i$. If $n_1=n_2=n_3=2$ then the theorem holds. If $n_1\ge 3$ and there is a $YZ$-cross section containing at most $(n_2-1)+(n_3-1)$ $1$-entries then we can delete this cross section from $M$ and apply induction because
$$ f(n_1-1, n_2, n_3)+(n_2-1)+(n_3-1)= f(n_1, n_2, n_3).
$$
Let us assume that the theorem holds for every matrix $M'$ which has size smaller than $n_1 \times n_2 \times n_3$ (but all dimensions of $M'$ are at least $2$).
Thus we can assume that there is no $YZ$-cross section with at most $(n_2-1)+(n_3-1)$ $1$-entries if $n_1\ge 3$. Similarly, we can assume that each $XY$-cross section has at least $(n_1+n_2-1)$ $1$-entries if $n_3\ge 3$ and that each $XZ$-cross section has at least $(n_1+n_3-1)$ $1$-entries if $n_2\ge 3$.

From the inductional assumption it follows that in $M$ there cannot be two parallel rows in the same cross section such that both have only $1$-entries. Suppose, on the contrary, e.g.,  that $Y(p)$ and $Y(q)$ are two full $1$ rows, $Y(p), Y(q)\subset YZ(p)$. Then $\cS_3$-freeness implies that $XY(p)$ has no $1$-entry outside $Y(p)$. So this cross section contains only $n_2$ non-zero entries, a contradiction.

\smallskip
For an entry $p$ in the matrix, let $t(p)$ be the number of rows containing $p$ that have at most one $1$-entry. Let $t$ be the maximum of $t(p)$ over every $0$-entry $p$. With a possible reordering of the layers of the matrix we can assume that $(0,0,0)$ is a $0$-entry and that $t(p)=t$, that is, no $0$-entry $p$ has $t(p)>t(0,0,0)$.  Note that $1\le t \le 3$.

Fix an injection $\lambda$ from $P$ into the $1$-entry rows of $M$ such that the $1$-entry row $\lambda(p)$ contains $p$, i.e.,  it is $r(p,i)$ for some $i\in \{1,2,3\}$. Such $\lambda$ exists because $M$ is $\cS_3$-free.

\smallskip

\noindent \textbf{Case $t=1$.}

In this case the single-entry lines associated with each $1$-entry are pairwise disjoint.
These $|P|$ lines cover at least  $|P|n_3$ entries of $M$ (whenever $n_3=\min\{ n_1,n_2,n_3\}$). Hence
\begin{equation*}
  |P|\le n_1\times n_2 =(n_1-1)(n_2-1) +(n_1-1)+(n_2-1) +1 < f(n_1, n_2, n_3).
 \end{equation*}

\smallskip
\noindent \textbf{Case $t=3$.}

In this case the three axes $X$, $Y$, and $Z$ are having a single $1$-entry each. Call these entries {\em special} and their 3-element set is denoted by $S$.
Let $\cL$ denote the set of all rows of types $(a,b,*)$ or $(a,*,c)$ or $(*,b,c)$  where $1\le a\le n_1-1, 1\le b\le n_2-1, 1\le c\le n_3-1$.
The size of  $\cL$  is $(n_1-1)(n_2-1)+(n_1-1)(n_3-1)+(n_2-1)(n_3-1)$.
To complete the proof of this case we define an injection $\varphi: P\setminus S \to \cL$.

	For each $1$-entry $p$ with no $0$-coordinate $\varphi(p):= \lambda(p)$.
For each $1$-entry $p$ with exactly one $0$-coordinate we assign to $p$ the row we get by replacing its $0$-coordinate with a $*$, i.e.,
 for $M(a,b,0)=1$ we have $\varphi(p):= (a,b,*)$,
 for $M(a,0,c)=1$ we have $\varphi(p):= (a,*,c)$, and
 for $M(0,b,c)=1$ we have $\varphi(p):= (*,b,c)$,
(here again $1\le a\le n_1-1, 1\le b\le n_2-1, 1\le c\le n_3-1$).
This assignment is an injection, as required, because $\lambda(p)\neq \lambda(q)$, and the set of rows assigned to the $1$-entries of
  $YZ(0,0,0)$, $XZ(0,0,0)$, $XY(0,0,0)$ are again distinct, and finally, if $(a,b,*)$ is assigned to $(a,b,0)$ then it is not a
single-entry line assigned to another $(a,b,c)$ with $a,b,c>0$.

\smallskip	
\noindent \textbf{Case $t=2$.}	
	
In this case with a possible reordering of the axes we can assume that $Y(0,0,0)$, and $Z(0,0,0)$ contain at most one $1$-entry, we call these special and put them into $S$. Let $Q$ be the set of $1$-entries on $X(0,0,0)$, we have $|Q|\geq 2$, so $n_1\geq 3$.
We define again an assignment $\varphi$. For each $1$-entry $p$ with no $0$-coordinate $\varphi(p):= \lambda(p)$. Notice that this assignment $\varphi: P\setminus (S\cup Q) \to \cL$ is so far an injection, as required.	

Our strategy is that for the remaining $1$-entries in $Q$ of the form $(a,0,0)$ we assign a row $\ell$, $\ell\in \cL$, $\ell\subset YZ(a,0,0)$, if there exists such a row. I.e., $\ell$ is of the form $(a,b,*)$ or $(a,*,c)$ with some positive $b$ or $c$ such that $\ell\notin \varphi(P\setminus (S\cup Q))$. By extending $\varphi$ with these assignments, it remains to be an injection.
We claim that all but at most one element of $Q$ can be assigned. The proof of this completes the proof of Theorem~\ref{thm:3-dim-no-3-hubs}.

Let $p=(a,0,0)\in Q$ arbitrary. Since $|Q|\ge 2$, there exists a $p'\in Q$, $p'\neq p$ and as there is no $\cS_3$ in $M$, either $Y(p)$ or $Z(p)$ is a single-entry line.
If $Y(p)$ is a single-entry line, and each $(a,b,*)\in \varphi(P\setminus (S\cup Q))$ then these are also single-entry lines. So the weight of $YZ(p)$
is exactly $(n_2-1)+|Z(p)|$. So if we cannot assign an appropriate line $\ell$ to $p$ then  $(n_2-1)+|Z(p)|\geq n_2+n_3-1$ by the induction hypothesis, so $Z(p)$ is a full $1$ row.
Since there are no two parallel full $1$ rows, this $p$ is unique in $Q$ (if it exists at all).
The case of  $Z(p)$ is a single-entry line is analogous.

We conclude that $\varphi$ can be extended to $Q$ apart from two entries (if they exist), namely a $p=(a,0,0)\in Q$ and  $p'=(a',0,0)\in Q$ where
 $Z(p)$ is a row with only $1$-entries while $Y(p)$ and every $(a,b,*)$ lines are single-entry lines,
 and  $Y(p')$ is a row with only $1$-entries and $Z(p')$ and every $(a',*,c)$ lines are single-entry lines ($1\le b\le n_2-1, 1\le c\le n_3-1$).

We claim that $p$ and $p'$ cannot exist simultaneously.
Let $r=(a,b,c)$ be a $1$-entry in $YZ(p)$ ($b,c>0$). As the $1$-entry $s=(a,0,c)$ is not a 3-hub, $X(s)$ must be all $0$-entries except for $s$. We obtained that  the $0$-entry $s'=(a',0,c)$ is contained in three  single-entry lines, namely  $X(s')$  $(=X(s))$,
 $Y(s')$  $(=(a',*,c))$, and  $Z(s')$  $(=Z(p'))$. This contradicts our condition $t\leq 2$.

If any of the $p$ and $p'$ exists we add it to the set of special vertices $S$, and got at most three special vertices and an injection $\varphi: P\setminus S \to \cL$.
\qed

\section{Conclusion}

\subsection{Related problems}\label{sec:further}

\paragraph{Forbidden subhypergraphs:}
As we have seen, ordering of the matrix is not relevant when forbidding every $k$-star. One can regard a $d$ dimensional $0$-$1$ matrix as the adjacency matrix of a $d$-partite $d$-uniform hypergraph. This way, e.g., $\ex_d(\cSd,n)$ is equivalent to the Tur\'an-type problem of determining the maximum number of hyperedges in a $d$-partite $d$-uniform hypergraph with every part having size $n$ that avoids a certain hypergraph as a subhypergraph. This forbidden hypergraph is the equivalent of a $d$-star, more precisely, it has a hyperedge $p$ (its hub) and $d$ further hyperedges, for each vertex of $p$ there is one that differs from $p$ in exactly that vertex.
F\"uredi and Özkahya considered $n$-vertex $\mathcal{F}$-free graphs and computed the Turán number $ex(n,\mathcal{F})$ for various collections of forbidden subhypergraphs $\mathcal{F}$ \cite{FuOz-11, FuOz-17}.


\paragraph{Star Avoidance Problem:}
A pattern avoidance problem in a $d$-dimensional $0$-$1$ matrix $M$ is called a star avoidance problem when the pattern is a star.
A set $S$ of vertices of a graph $G$ is a \textit{neighborhood (nbd) transversal} of $G$ if  $N[v] \cap S \neq \emptyset$, for each $v\in V(G)$, where $N[v]$ is the closed neighborhood of $v$. A \textit{nbd transversal problem} is to find a minimum cardinality nbd transversal of $G$ (also called the \textit{domination problem}). We next define a related notion.
A set $S$ of vertices of $G$ is a \textit{stable star transversal} of $G$ if for every $v\in V(G)$, $S$ intersects $I\cup\{v\}$ for every maximal stable subset $I$ of $N(v)$, the open neighborhood of $v$.
A \textit{stable star transversal problem} is to find a minimum cardinality stable star transversal of $G$. One can easily demonstrate that the stable star transversal problem is NP-complete by using the fact that the domination problem is NP-complete for bipartite graphs.

The entries of $M$ can be also regarded as the vertices of the Cartesian product of $d$ complete graphs, $K_{n_1} \square \dots \square K_{n_d}$. In particular, excluding all $1$-stars in $M$ is equivalent to having an independent set in this graph. Notice that any vertex $v$ of this graph together with any maximal stable subset of its open neighborhood is a star in $M$ and every star has this form. Therefore, the stable star transversal problem in $K_{n_1} \square \dots \square K_{n_d}$ is about finding a minimal size $S$ that intersects every star in $M$, thus the complement of $S$ avoids every star and is maximal such and so its size is equal to $\ex_d(\cSd,n_1,\dots,n_d)$. The stable star transversal problem was solved for $d=2$ by Manuel et al \cite{MaBr-23} and was left open problem for $d \geq 3$. Theorem \ref{thm:3-dim-no-3-hubs} solves the case $d=3$ and Claim \ref{claim:d-hub} gives an estimate for $d\ge 4$. The exact answer remains an open problem for $d \geq 4$.

\paragraph{Geodesic Avoidance Problem:}
A pair of 1-Entries $(x_1, \dots x_{i-1}, x_i, x_{i+1} \dots x_d)$ and $(x_1, \dots x_{i-1}, y_i, x_{i+1} \dots x_d)$, $x_i \neq y_i$, is called an \textit{$i$-dimensional edge} in a $d$-dimensional $0$-$1$ matrix $M$. A  path of edges in $M$ is said to be a \textit{dimension-distinct geodesic} if no two edges in the path are of the same dimension. A dimension-distinct geodesic of length $k$ in $M$ is said to be a \textit{dimension-distinct $k$-geodesic}.
A pattern avoidance problem in  $M$ is called a \textit{dimension-distinct $k$-geodesic avoidance problem} when the pattern is a dimension-distinct $k$-geodesic.

A geodesic (i.e., a shortest path) in a graph $G$ is maximal if it is not contained (as a subpath) in any other geodesic of $G$. A set $S$ of vertices of $G$ is a geodesic transversal of $G$ if every maximal geodesic of $G$ contains at least one vertex of $S$.
The geodesic transversal problem of $G$ is to find the minimum cardinality
of a geodesic transversal of $G$.
A geodesic packing of a graph $G$ is a set of vertex disjoint maximal geodesics in $G$. The geodesic packing problem of $G$ is to find the maximum cardinality of a geodesic packing of $G$.
The geodesic transversal problem and the geodesic packing problem are dual min-max invariant problems \cite{MaBr-23}. Note that for $k=2$ the maximal geodesics are the stars, Manuel et al \cite{MaBr-23} solved the geodesic packing problem for $K_{n_1} \square K_{n_2}$ by solving $\ex_2(\cS_2,n)$ and left the geodesic packing problem open for $d \geq 3$. Note that for $d\geq 3$ maximal geodesics are not stars anymore.

\paragraph{Binary covering codes of radius 1:}
The $k=d$ case of the star avoidance problem can be regarded as a coding theory problem as well. A $d$ dimensional $q$-ary covering code of radius $R=1$ is a subset $C$ of the vectors of dimension $d$ with coordinates in $\{0,1,\dots q-1\}$ such that for every vector there is a vector in $C$ which differs in at most $R$ coordinates from it. We claim that in the case $n=2$ it holds that $2^d-\ex_d(\cS_d,2)$ is equal to the maximal size of a $d$ dimensional binary covering code of radius $R=1$ (with respect to the Hamming distance), a well known open problem in coding theory. Indeed, the $0$-entries of a $d$-dimensional $q\times\dots\times q$ matrix $M$ that avoids $S_d$ gives a $d$ dimensional binary covering code of radius $R=1$ as every $1$-entry $p$ of $M$ has a $0$-entry $p'$ which differs in exactly one coordinate from $p$. In the other direction, having a binary covering code $C$ of radius $1$, let $M$ be a $0$-$1$ matrix whose $0$-entries are exactly those whose vectors are in $C$, then $M$ avoids all $d$-stars. Note that this holds only when $n=2$ as
for $n\ge 3$ having a $0$-entry $p'$ in some row of $p$ is not equivalent anymore to having no $1$-entry in this row besides $p$. Codes, including covering codes, have broad literature. For a list of results and citations about covering codes see, e.g., \cite{CHLL, Keri-10}.

\paragraph{Other problems about to $0$-$1$-matrices:}
An interesting connection of pattern avoidance problems is with forbidden subposet problems, see \cite{MePa2017}. A recent variant of the pattern avoidance problem that became popular is the saturation problem of multidimensional $0$-$1$ matrices~\cite{FuKe-21, Geneson-21, GeTs-23, Ber-23, Tsai-23}.

Yet another branch of $0$-$1$-matrix research is when we want to avoid induced copies of some pattern $A$. An induced copy of $A$ in $M$ is a submatrix of $M$ which is the same as $A$.  If we further allow permutation of rows and columns, we get the topic of forbidden subconfigurations where typically one wants to maximize the number of columns  when the number of rows is fixed such that the matrix avoids a given subconfiguration and no columns are repeated. For more on this see the survey of Anstee and Sali~\cite{AnSali-24}.

Further results about forbidden induced subgraphs consider equivalent characterizations and algorithmic questions. As an example we mention balanced $0$-$1$ matrices, which were introduced by Berge while studying the classes of perfect graphs \cite{Berge-72}. Related notions include totally balanced matrices and greedy matrices. Let us also mention the line of research regarding the existence of large homogeneous submatrices, see., e.g. the results of Kor\'andi, Pach, and Tomon~\cite{KoPa-20}.


\subsection{Open problems}

Note that for simplicity we mostly concentrated only on the case when all dimensions of the matrix $M$ are of the same size $n$. In the cases  $k=d=2$ and $k=d=3$, we proved exact results also for the case when the dimensions differ. It would be nice to have a similar exact bound for $k=2, d=3$:
\begin{conjecture}
	$\ex_3(\cS_2,n)=\frac{3}{2}n(n-1)+1$ if $n\ge 2$.
\end{conjecture}

\begin{problem}
	Further improve the bounds on the star avoidance problem. That is, given an integer $k \leq d$, find a maximum cardinality $S$, a set of $1$-entries,  in a $0$-$1$ matrix  $M$ such that $S$ contains no $k$-stars  where the dimension of $M$ is  $n_1\times n_2 \times \dots \times n_d$.
\end{problem}

\begin{problem}
Improve the bounds on the geodesic avoidance problem. That is, given an integer $k \leq d$, find a maximum cardinality $S$, a set of $1$-entries,  in a $0$-$1$ matrix  $M$ such that $S$ contains no dimension-distinct $k$-geodesics where the dimension of $M$ is  $n_1\times n_2 \times \dots \times n_d$.
\end{problem}

Both problems have the following variant: given a fixed subset $Q$ of the entries of $M$, find a maximal cardinality $S\subseteq Q$ with the appropriate property. This can be regarded as an algorithmic problem and also can be interesting to give bounds for special sets of $Q$.

\paragraph{Acknowledgment}
We are grateful to Bo\v stjan Bre\v sar and Sandi Klav\v zar for introducing this problem to us and for their insightful comments in the initial stages of our research.

\footnotesize
\bibliographystyle{plainurl}
\bibliography{01star.bib}

\end{document}